\newtheorem{thm}{Theorem}[section] 
\newtheorem{cor}[thm]{Corollary}
\newtheorem{lem}[thm]{Lemma}
\newtheorem{prop}[thm]{Proposition}
\theoremstyle{definition}
\newtheorem{rem}[thm]{Remark}
\newtheorem{exmpl}[thm]{Example}
\newtheorem{defn}[thm]{Definition}
\newtheorem{notation}[thm]{Notation}
\newcommand\operA[2]{{\if!#2!\operatorname{#1}\else{\operatorname{#1}_{#2}^{\phantom{I}}}\fi}} 
\newcommand\Cref[1]{{Corollary~\ref{#1}}}%
\renewcommand{\phi}{\varphi}
\def\tr{{\operatorname{Tr}}}
\def\dlog{{\operatorname{dlog}}}
\def\Br{{\operatorname{Br}}}
\def\CH{{\operatorname{CH}}}
\def\TCH{{\operatorname{TCH}}}
\def\Z{\mathbb{Z}}
\def\F{\mathbb{F}}
\def\K{\mathbb{K}}
\newcommand{\A}{\mathbb A}
\renewcommand{\phi}{\varphi}
\newcommand{\Trace}[1][]{\if!#1!\operatorname{Tr}\else{\operatorname{Tr}_{#1}^{\phantom{I}}}\fi} 
\long\def\forget#1\forgotten{{}} %
\def\({\left(}
\def\){\right)}
\newif\iffurther
\newif\ifXY 
\journal{Israel Journal of Mathematics}
\begin{document}

\begin{frontmatter}

\title{The descent of biquaternion algebras in characteristic two}

\author[1,2]{Demba Barry}
\ead{barry.demba@gmail.com}

\author[3]{Adam Chapman}
\ead{adam1chapman@yahoo.com}

\author[4]{Ahmed Laghribi}
\ead{ahmed.laghribi@univ-artois.fr}

\address[1]{Facult\'e des Sciences et Techniques de Bamako, BP: E3206 Bamako, Mali}
\address[2]{Departement Wiskunde--Informatica, Universiteit Antwerpen, Belgium}

\address[3]{Department of Computer Science, Tel-Hai College, Upper Galilee, 12208 Israel}

\address[4]{Universit\'e d'Artois, Facult\'e des Sciences Jean Perrin, Laboratoire de math\'ematiques de Lens EA 2462, rue Jean Souvraz - SP18, 62307 Lens, France}

\begin{abstract}
In this paper we associate an invariant to a biquaternion algebra $B$ over a field $K$ with a subfield $F$ such that $K/F$ is a quadratic separable extension and $\operatorname{char}(F)=2$. We show that this invariant is trivial exactly when $B \cong B_0 \otimes K$ for some biquaternion algebra $B_0$ over $F$.
We also study the behavior of this invariant under certain field extensions and provide several interesting examples.
\end{abstract}

\begin{keyword}
Kato-Milne Cohomology, Cohomological invariants, Algebras with involution, Biquaternion algebras
\MSC[2010] primary 11E81; secondary 11E04, 16K20, 19D45
\end{keyword}

\end{frontmatter}

\section{Introduction}

Given a central simple algebra $C$ over a field $K$ with a subfield $F$, we say that $C$ has a descent to $F$ if there exists a central simple algebra $C_0$ over $F$ such that $C \cong C_0 \otimes K$.
When $[K:F]=\exp(C)$, one necessary condition for $C$ to have a descent to $F$ is that $\operatorname{cor}_{K/F}(C) \sim_{Br} F$, because if $C \cong C_0 \otimes K$ then $\operatorname{cor}_{K/F}(C)=\operatorname{cor}_{K/F}(C_0 \otimes K)=C_0^{\otimes [K:F]} \sim_{Br} F$, where $\sim_{Br}$ denotes the Brauer equivalence.

If $K/F$ is a separable quadratic extension and $Q$ is a quaternion algebra over $K$, then\break $\operatorname{cor}_{K/F}(Q) \sim_{Br} F$ is a necessary and sufficient condition for $Q$ to have a descent to $F$ by \cite[Chapter X, Theorem 21]{Albert:1968}. This fact does not generalize to biquaternion algebras.
For example, take a division algebra $A$ over $F$ such that $\deg(A)=8$ and $\exp(A)=2$ which does not decompose as a tensor product of three quaternion algebras (e.g. \cite{AmitsurRowenTignol:1979}), take a separable quadratic field extension $K$ of $F$ inside $A$ (which exists by \cite{Rowen:1978}) and $B$ to be the centralizer of $K$ in $A$. Then $B \sim_{Br} A \otimes K$ and so $\operatorname{cor}_{K/F}(B) \sim_{Br} F$. However, if $B \cong B_0 \otimes K$ for some central simple algebra $B_0$ over $F$, then $A$ decomposes as the tensor product of three quaternion algebras, a contradiction. Therefore $B$ has no descent to $F$.

In \citep{Barry}, the invariant $\delta$ was associated to any biquaternion algebra $B$ over $K$ where $K/F$ is a quadratic field extension and $\operatorname{char}(F) \neq 2$. It was shown that the invariant is trivial exactly when $B$ has a descent to $F$.
This invariant proved to be a refinement of an invariant $\Delta$ defined in \cite[Section 11]{GaribaldiParimalaTignol} for algebras of degree 8 and exponent 2, which is trivial when the algebra decomposes and is conjectured to be nontrivial otherwise.
It was then used to show the existence of at least one algebra $A$ of degree 8 and exponent 2 with nontrivial $\Delta(A)$.
The main tool Barry used in order to construct this example was \cite[Proposition 4.9]{Barry}, whose characteristic 2 analogue is given in this paper as Proposition \ref{Injection}. 

In this paper we present the characteristic 2 analogue for $\delta$. We associate an invariant to any biquaternion algebra $B$ over $K$ where $K/F$ is a separable quadratic extension and 
$\operatorname{char}(F) =2 $. We show that this invariant is trivial exactly when $B$ has a descent to $F$, and study its behavior under certain field extensions, proving (among other things) that it does not split over odd degree field extensions (see Proposition \ref{injoddext}).
This invariant is defined using the Kato-Milne cohomology groups, which form a characteristic 2 analogue to the classical Galois cohomology groups.

\section{Kato-Milne cohomology}

Throughout this paper $F$ denotes a field of characteristic 2.
Let $\Omega_F^m=\wedge^m\Omega_F^1$ be the space of absolute $m$-differential forms over $F$, where $\Omega_F^1$ is the $F$-vector space generated by the symbols $dx$, $x \in F$, subject to the relations: 
\begin{eqnarray*}
d(x+y)&=&dx+dy,\\
d(xy)&=&xdy+ydx
\end{eqnarray*}
for any $x,y \in F$. We set $\Omega_F^0=F$. Clearly, $d(x^2y)=x^2d(y)$ for all $x,y\in F$, and thus the map 
\begin{eqnarray*}
F &\rightarrow& \Omega_F^1\\
x &\mapsto& dx
\end{eqnarray*}
is $F^2$-linear, where $F^2=\{x^2 \mid x \in F\}$. This map extends to the differential operator
$d: \Omega_F^m  \rightarrow \Omega_F^{m+1}$ defined by
$$y dx_1 \wedge \dots \wedge dx_m \mapsto dy \wedge dx_1 \wedge \dots \wedge dx_m.$$

Let $\mathcal{B}=\{e_i \mid i \in I\}$ be a 2-basis of $F$, which means that the set$$\left\{ \prod_{i \in I} e_i^{\varepsilon_i} \mid \varepsilon_i \in \{0,1\}, \enspace \text{and for almost all} \enspace i \in I, \varepsilon_i=0\right\}$$is a basis of $F$ over $F^2$. We choose an ordering on $I$. So the set 
$$\left \{\frac{de_{i_1}}{e_{i_1}} \wedge \dots \wedge \frac{de_{i_m}}{e_{i_m}} \mid e_{i_1},\dots,e_{i_m} \in \mathcal{B} \enspace \text{and} \enspace i_1 < i_2 <\dots <i_m \right \}$$
is a basis of the $F$-vector space $\Omega_F^m$.

The usual Frobenius map $F\rightarrow F$, $x\mapsto x^2$, extends to a well-defined map, called the Frobenius operator, as follows:
\begin{eqnarray*}
\Phi : \Omega_F^m &\rightarrow & \Omega_F^m/d\Omega_F^{m-1}\\
\sum_{i_1< \dots <i_m} c_i \frac{de_{i_1}}{e_{i_1}} \wedge \dots \wedge \frac{e_{i_m}}{e_{i_m}} &\mapsto & \sum_{i_1< \dots <i_m} c_i^2 \frac{de_{i_1}}{e_{i_1}} \wedge \dots \wedge \frac{e_{i_m}}{e_{i_m}}+d\Omega_F^{m-1}.
\end{eqnarray*}

The Artin-Schreier operator $\wp : \Omega_F^m \rightarrow \Omega_F^m/d\Omega_F^{m-1}$ is defined by $\wp=\Phi-\operatorname{Id}$. 

Let $H_2^{m+1}(F)$ and $\nu_F(m)$ denote the cokernel and the kernel of $\wp$, respectively. Usually we will consider the operator $\wp$ as a map $\Omega_F^m \rightarrow \Omega_F^m$ given by: $$\sum_{i_1< \cdots <i_m} c_i \frac{de_{i_1}}{e_{i_1}} \wedge \cdots \wedge \frac{e_{i_m}}{e_{i_m}} \mapsto  \sum_{i_1< \dots <i_m} \wp(c_i) \frac{de_{i_1}}{e_{i_1}} \wedge \dots \wedge \frac{e_{i_m}}{e_{i_m}}.$$

This map depends on the choice of the $2$-basis, and it is well-defined modulo $d\Omega_F^{m-1}$. With that we may take $$H_2^{m+1}(F)=\Omega_F^m/(d\Omega_F^{m-1}+\wp(\Omega_F^m)).$$ 

A famous result of Kato \cite{Kato:1982} asserts the following isomorphism:
\begin{eqnarray*}
f_{m+1}: H_2^{m+1}(F) & \rightarrow & I^m F \otimes W_q(F)/ I^{m+1} F \otimes W_q(F)\\
\overline{a \frac{dx_1}{x_1} \wedge \cdots \wedge \frac{dx_m}{x_m}} & \mapsto & \overline{\langle \langle x_1,\cdots , x_m \rangle \rangle \otimes [1,a]}
\end{eqnarray*}
where $I^mF$ is the $m$th power of the fundamental ideal of the Witt ring $W(F)$ of nondegenerate symmetric $F$-bilinear forms, $W_q(F)$ is the Witt group of non singular $F$-quadratic forms, $\left<\left< x_1,\cdots , x_n \right>\right>$ is the $n$-fold bilinear Pfister form $\left< 1,x_1 \right> \otimes \cdots \otimes \left< 1,x_n \right>$, and $[1,a]$ is the binary quadratic form $X^2+XY+a Y^2$. We have $H_2^1(F)\cong F/\wp(F)$ and $H_2^2(F)\cong {}_2\Br(F)$ the 
$2$-torsion of the Brauer group of $F$.

Kato also proved in \cite{Kato:1982} that $\nu_F(m)$ coincides with the additive group generated by the differentials $\frac{da_1}{a_1}\wedge \ldots \wedge \frac{da_m}{a_m}$ for $a_1, \cdots, a_m\in F^{\times}$, and it is isomorphic to the quotient $I^mF/I^{m+1}F$ in a natural way.

\section{An invariant for biquaternion algebras over a quadratic extension}\label{subsection2}


From now on $K$ denotes a separable quadratic extension of $F$. We have $K=F[\alpha]$, where $\wp(\alpha)=a \in F \setminus \wp(F)$.
Without loss of generality, we can assume $a \in F^2$. 

Because of the separability of the extension $K/F$, any $2$-basis of $F$ remains a $2$-basis of $K$, and thus we have $\Omega_K^m=\Omega_F^m \oplus \alpha \Omega_F^m$ for any nonnegative integer $m$.

Let $\tr : K \rightarrow F$ denote the trace map. This map extends to 
%
%
\begin{eqnarray*}
\tr_* : \Omega_K^m &\rightarrow& \Omega_F^m\\
w_0+\alpha w_1 & \mapsto & w_1
\end{eqnarray*}
for all $w_0,w_1 \in \Omega_F^m$.

Since $a \in F^2$, we also have $\alpha \in K^2$, and thus$$d\Omega_K^m = d\Omega_F^m \oplus \alpha d\Omega_F^m$$$$\wp(w_0+\alpha w_1)=\wp(w_0)+a \Phi(w_1)+\alpha \wp(w_1)$$for all $w_0,w_1 \in \Omega_F^m$.
Hence, $\tr_*(d \Omega_K^m) \subset d \Omega_F^m$ and $\tr_*(\wp(\Omega_K^m)) \subset \wp(\Omega_F^m)$.
Consequently, $\tr_*$ extends to a map from $H_2^{m+1}(K)$ to $H_2^{m+1}(F)$.

%

%

Moreover, by \citep[Corollary 2.5]{AravireBaeza:1989} (see also \cite[Complex 34.20]{EKM}), the group homomorphism $\tr_*: W_q(K) \longrightarrow W_q(F)$ induces by the trace map $\tr : K \rightarrow F$ satisfies $\tr_*(I^n K \otimes W_q(K)) \subset I^n F \otimes W_q(F)$ for all $n \geq 0$. This is a part of the following exact sequence:

\begin{prop}[{\cite[Cor. 6.5]{AravireBaeza:1992}}]
The following sequence is exact:
$$\xymatrix{ 0 \ar[r]& I^nF \otimes [1,a] \ar[r] & I^nF \otimes W_q(F) \ar@{->}[r]^{\operatorname{res}_{K/F}} & I^nK \otimes W_q(K) \ar@{->}[r]^{\tr_*} & I^nF \otimes W_q(F) \ar[r] &0
}$$
where $\operatorname{res}_{K/F}$ is the restriction map.
\label{exaseq}
\end{prop}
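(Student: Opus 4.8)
The plan is to verify exactness at each of the four positions separately, deferring the two genuinely difficult spots to an analysis of the associated graded for the filtration by the $I^mF\otimes W_q(F)$. Two of the positions are essentially formal. Injectivity of the inclusion $I^nF\otimes[1,a]\hookrightarrow I^nF\otimes W_q(F)$ is automatic, and the inclusion $I^nF\otimes[1,a]\subseteq\ker\res[K/F]$ is clear: the norm form $[1,a]$ becomes isotropic, hence hyperbolic, over $K$ because $\alpha$ is a root of $X^2+XY+aY^2$ (as $\wp(\alpha)=a$), so every $\tau\otimes[1,a]$ with $\tau\in I^nF$ restricts to zero. For the surjectivity of $\tr_*$ at the right-hand end I would use the projection formula $\tr_*(\res[K/F](\beta)\cdot\phi)=\beta\cdot\tr_*(\phi)$. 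The group $I^nF\otimes W_q(F)$ is generated by elements $\pi\otimes\rho$ with $\pi\in I^nF$ an $n$-fold bilinear Pfister form and $\rho\in W_q(F)$; writing $\rho=\tr_*(\phi)$ for some $\phi\in W_q(K)$, which is possible by the classical surjectivity of $\tr_*\colon W_q(K)\to W_q(F)$ (the $n=0$ case), gives $\pi\otimes\rho=\tr_*(\res[K/F](\pi)\cdot\phi)$ with $\res[K/F](\pi)\cdot\phi\in I^nK\otimes W_q(K)$.

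For the two inner positions I would pass to the associated graded. By Kato's isomorphism $f_{m+1}$ the quotient $I^mF\otimes W_q(F)/I^{m+1}F\otimes W_q(F)$ is $H_2^{m+1}(F)$, and $\res[K/F]$ and $\tr_*$ are filtered, so it suffices to establish the analogous sequence on each graded layer. Using $\Omega_K^m=\Omega_F^m\oplus\alpha\Omega_F^m$ and $d\Omega_K^{m-1}=d\Omega_F^{m-1}\oplus\alpha d\Omega_F^{m-1}$, set $\overline\Omega^m=\Omega^m/d\Omega^{m-1}$; then there is a short exact sequence of $F$-modules $0\to\overline\Omega_F^m\to\overline\Omega_K^m\xrightarrow{\tr_*}\overline\Omega_F^m\to0$, the first map being inclusion and the second the $\alpha$-coefficient. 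The formula $\wp(w_0+\alpha w_1)=\wp(w_0)+a\Phi(w_1)+\alpha\wp(w_1)$ shows that both maps commute with $\wp$, so applying the snake lemma to $\wp$ acting on this sequence yields the exact sequence $0\to\nu_F(m)\to\nu_K(m)\to\nu_F(m)\xrightarrow{\partial}H_2^{m+1}(F)\xrightarrow{\res[K/F]}H_2^{m+1}(K)\xrightarrow{\tr_*}H_2^{m+1}(F)\to0$. In particular $\tr_*$ is onto and $\ker\res[K/F]=\operatorname{im}\partial$ on each layer.

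It then remains to identify $\operatorname{im}\partial$. Chasing the connecting map, a class $\omega\in\nu_F(m)$ lifts along $\tr_*$ to $\alpha\omega$, and $\wp(\alpha\omega)=a\Phi(\omega)=a\omega$, where $\Phi(\omega)=\omega$ since $\wp(\omega)=0$; hence $\partial(\omega)=\overline{a\omega}$. As $\nu_F(m)$ is generated by the forms $\frac{db_1}{b_1}\wedge\dots\wedge\frac{db_m}{b_m}$, the image of $\partial$ is generated by the classes $\overline{a\frac{db_1}{b_1}\wedge\dots\wedge\frac{db_m}{b_m}}$, which under $f_{m+1}$ correspond precisely to $\langle\langle b_1,\dots,b_m\rangle\rangle\otimes[1,a]$. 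Thus $\operatorname{im}\partial$ is exactly the image of $I^mF\otimes[1,a]$ in $H_2^{m+1}(F)$, and the desired sequence is exact on every graded layer.

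The main obstacle is the passage from exactness of the associated graded to exactness of the filtered sequence itself, which does not follow for purely formal reasons. I would base it on the separatedness $\bigcap_m\big(I^mF\otimes W_q(F)\big)=0$, i.e.\ the characteristic-two Arason--Pfister Hauptsatz, together with a successive-approximation argument: given $\psi\in I^nF\otimes W_q(F)$ with $\res[K/F](\psi)=0$, the graded statement produces $\eta_m\in I^mF\otimes[1,a]$ correcting $\psi$ modulo $I^{m+1}F\otimes W_q(F)$ while keeping the difference in $\ker\res[K/F]$, and the middle position is treated symmetrically from the base fact $\ker(\tr_*\colon W_q(K)\to W_q(F))=\res[K/F](W_q(F))$ together with the projection formula. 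The delicate point, which I expect to absorb most of the work, is showing that this approximation terminates (equivalently, that $I^nF\otimes[1,a]$ and $\res[K/F](I^nF\otimes W_q(F))$ are closed in the $I$-adic topology), for which one must control the dimensions of the anisotropic representatives and not merely their filtration level.
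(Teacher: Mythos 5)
You should first note what you are up against: the paper does not prove this proposition at all --- it is imported verbatim from Aravire--Baeza \cite[Cor.~6.5]{AravireBaeza:1992} --- so your attempt is measured purely on its own completeness. The formal parts of your plan are sound: $[1,a]_K$ is hyperbolic, so $I^nF\otimes[1,a]\subseteq\ker(\operatorname{res}_{K/F})$; surjectivity of $\tr_*$ on $I^nK\otimes W_q(K)$ does follow from Frobenius reciprocity plus the classical $n=0$ surjectivity; and your snake-lemma derivation of the graded sequence $0\to\nu_F(m)\to\nu_K(m)\to\nu_F(m)\xrightarrow{\partial}H_2^{m+1}(F)\to H_2^{m+1}(K)\to H_2^{m+1}(F)\to 0$, with the computation $\partial(\omega)=\overline{a\omega}$, is correct (using the paper's normalization $a\in F^2$, so that $a\,d\Omega_F^{m-1}\subseteq d\Omega_F^{m-1}$; you also implicitly need that the graded maps induced by $\operatorname{res}_{K/F}$ and $\tr_*$ agree with the cohomological ones under Kato's isomorphism, which for $\tr_*$ is exactly Proposition~\ref{Diagram}).

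The genuine gap is the step you flag yourself and then do not close: the passage from exactness of the associated graded to exactness of the filtered sequence. Separatedness of the filtration (Arason--Pfister) only makes the $I$-adic topology Hausdorff; it gives no convergence, and a separated, non-complete filtration can perfectly well carry maps whose graded sequence is exact while the filtered one is not. Concretely, at the spot $\ker(\operatorname{res}_{K/F})=I^nF\otimes[1,a]$, your approximation produces $\eta_m\in I^mF\otimes[1,a]$ with $\psi-\sum_{m\le M}\eta_m\in I^{M+1}F\otimes W_q(F)$, but since you have no a priori bound on the dimensions of anisotropic representatives of the partial sums, the Hauptsatz never forces the remainder to vanish at a finite stage. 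Worse, the ``closedness'' you would need to terminate --- for instance, that $\beta\otimes[1,a]\in I^{m+1}F\otimes W_q(F)$ with $\beta\in I^mF$ forces $\beta\otimes[1,a]\in I^{m+1}F\otimes[1,a]$ --- is, granting the classical level-zero kernel $\ker(W_q(F)\to W_q(K))=W(F)\otimes[1,a]$, precisely the intersection statement $(W(F)\otimes[1,a])\cap(I^nF\otimes W_q(F))=I^nF\otimes[1,a]$ that constitutes the hard half of the proposition; so the plan re-poses the problem one filtration level up rather than solving it, and the same circularity occurs at the middle spot. Aravire--Baeza's actual argument works directly with the $W(F)$-module structure of $I^nK\otimes W_q(K)$, via generation by the Pfister forms $\langle\langle a_1,\dots,a_n,b]]$ with bilinear slots in $F$ (the paper's Proposition~\ref{Generators}) and explicit transfer computations --- not via a limit argument --- and that, or simply the citation, is what a complete treatment requires.
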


We will also need the following result describing the group $I^mK \otimes W_q(K)$.

\begin{prop}[{\cite[Corollary 2.4]{AravireBaeza:1989}}]\label{Generators}
For any $n \geq 0$, $I^n K \otimes W_q(K)$ is generated by the Pfister forms $\langle\langle a_1,\dots,a_n,b]]$ where $a_1,\dots,a_n \in F^\times$ and $b \in K$.
\end{prop}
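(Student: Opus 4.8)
The plan is to deduce the statement from the exact sequence of \Pref{exaseq} together with one explicit transfer computation. Write $G_n\subseteq I^nK\otimes W_q(K)$ for the subgroup generated by the forms $\langle\langle a_1,\dots,a_n,b]]$ with $a_1,\dots,a_n\in F^\times$ and $b\in K$; the goal is to show $G_n=I^nK\otimes W_q(K)$. Over $F$ itself I will use freely the standard generation result that $I^nF\otimes W_q(F)$ is generated (as an abelian group) by the $(n+1)$-fold quadratic Pfister forms $\langle\langle a_1,\dots,a_n,c]]$ with $a_i,c\in F$; this is the vanilla version of the statement over a single field, with no subfield constraint, and is prior to what we are proving.

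The key computation is the value of the transfer $\tr_*\colon W_q(K)\to W_q(F)$ on a binary form. Its underlying $F$-linear functional is the field trace $\tr\colon K\to F$, which by separability satisfies $\tr(1)=0$ and $\tr(\alpha)=1$. Expanding $[1,b]=X^2+XY+bY^2$ in the $F$-basis $\{1,\alpha\}$ of $K$, using $\alpha^2=\alpha+a$, and applying $\tr$ coordinatewise produces a four-dimensional $F$-form which, after two hyperbolic planes are split off, is isometric to $[1,\tr(b)]$; that is,
\[
\tr_*\left([1,b]\right)=[1,\tr(b)]\qquad(b\in K).
\]
Since $\tr$ is surjective (indeed $\tr(c\alpha)=c$ for $c\in F$), combining this with the projection formula $\tr_*(\operatorname{res}_{K/F}(\beta)\otimes q)=\beta\otimes\tr_*(q)$ for $\beta\in W(F)$ bilinear and $q\in W_q(K)$ gives, for $a_i\in F^\times$,
\[
\tr_*\left(\langle\langle a_1,\dots,a_n,c\alpha]]\right)=\langle\langle a_1,\dots,a_n\rangle\rangle\otimes[1,c]=\langle\langle a_1,\dots,a_n,c]].
\]
Thus every generator of $I^nF\otimes W_q(F)$ is the $\tr_*$-image of a form lying in $G_n$.

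To assemble the argument, take any $\xi\in I^nK\otimes W_q(K)$. By the generation result over $F$ we may write $\tr_*(\xi)=\sum_j\langle\langle a_1^{(j)},\dots,a_n^{(j)},c^{(j)}]]$, so by the previous paragraph the element $\eta:=\sum_j\langle\langle a_1^{(j)},\dots,a_n^{(j)},c^{(j)}\alpha]]$ lies in $G_n$ and satisfies $\tr_*(\eta)=\tr_*(\xi)$. Hence $\xi-\eta\in\ker\tr_*$, which by exactness in \Pref{exaseq} equals $\operatorname{Im}(\operatorname{res}_{K/F})$. Writing $\xi-\eta=\operatorname{res}_{K/F}(\zeta)$ and expanding $\zeta$ as a sum of forms $\langle\langle a_1,\dots,a_n,c]]$ over $F$, its restriction is a sum of the forms $\langle\langle a_1,\dots,a_n,c]]$ over $K$ with all slots in $F\subseteq K$, which again lie in $G_n$. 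Therefore $\xi=\eta+\operatorname{res}_{K/F}(\zeta)\in G_n$, so $G_n=I^nK\otimes W_q(K)$.

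I expect the main obstacle to be the transfer identity $\tr_*([1,b])=[1,\tr(b)]$: one must keep careful track of both the quadratic part and the alternating polar part of the four-dimensional transferred form and verify that the two hyperbolic planes split off correctly in characteristic two, where diagonalization of the quadratic part is unavailable. Once this identity and the projection formula are in hand, the remainder of the proof is purely formal manipulation inside the exact sequence of \Pref{exaseq}.
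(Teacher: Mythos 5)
Your argument is correct, but there is no proof in the paper to compare it against: Proposition \ref{Generators} is stated with no proof, imported as Corollary 2.4 of Aravire--Baeza. Your route instead derives it from Proposition \ref{exaseq} (which is Corollary 6.5 of that same reference) together with Frobenius reciprocity and a transfer computation --- essentially the toolkit the paper deploys in proving Proposition \ref{Diagram} --- and the assembly (match $\tr_*(\xi)$ by an element $\eta$ of $G_n$ built from forms $\langle\langle a_1,\dots,a_n,c\alpha]]$, then absorb $\xi-\eta\in\ker\tr_*=\operatorname{Im}(\operatorname{res}_{K/F})$) is sound. One point deserves emphasis: where the paper only ever uses the congruence $\tr_*([1,b])\equiv[1,\tr(b)]\pmod{IF\otimes W_q(F)}$ (EKM, Lemma 34.14), your argument genuinely needs the exact equality of Witt classes in $W_q(F)$, since the congruence would only yield $\tr_*(\xi-\eta)\in I^{n+1}F\otimes W_q(F)$ rather than $\xi-\eta\in\ker\tr_*$, and the clean one-step kernel argument would be lost. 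The exact equality does hold for the trace functional of a separable quadratic extension in characteristic two: writing $\alpha^2=\alpha+a$, $\tr(1)=0$, $\tr(\alpha)=1$ and computing the $4$-dimensional transferred form directly, one finds $\tr_*([1,b])$ isometric to one hyperbolic plane orthogonally summed with $[1,\tr(b)]$ --- note the small slip in your prose: exactly one hyperbolic plane splits off, not two, as dimensions force ($4=2+2$). Your reliance on the single-field fact that $I^nF\otimes W_q(F)$ is generated by $(n+1)$-fold quadratic Pfister forms is legitimately prior and non-circular. Finally, be aware that as a reconstruction of the literature your derivation inverts the actual dependency: in Aravire--Baeza, Corollary 2.4 precedes Corollary 6.5 and comes out of their structure theory of $W_q(K)$ as a $W(F)$-module, so your proof is admissible here only because this paper takes Proposition \ref{exaseq} as an independent black box --- which it does, making your argument a short, self-contained substitute for the citation within the paper's framework.
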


For any $m\geq 0$, let us denote by $e^{m+1}:I^mF \otimes W_q(F)\rightarrow H_2^{m+1}(F)$ the group homomorphism  given by:$$\varphi \mapsto f_{m+1}^{-1}(\varphi+ I^{m+1}F \otimes W_q(F))$$where $f_{m+1}$ is the Kato isomorphism given in the previous section. In particular, for any $\phi \in I^mF \otimes W_q(F)$, we have $e^{m+1}(\phi)=0$ iff $\phi \in I^{m+1}F \otimes W_q(F)$.


\begin{prop}
The diagram 
$$\xymatrix{
I^n K \otimes W_q(K)\ar[r]^{\tr_*}\ar[d]^{e^{n+1}} & I^n F \otimes W_q(F)\ar[d]^{e^{n+1}}\\
H_2^{n+1}(K)\ar[r]^{\tr_*} & H_2^{n+1}(F)}$$
is commutative for any $n \geq 0$.
\label{Diagram}
\end{prop}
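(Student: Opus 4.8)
The plan is to combine additivity with the generators from Proposition~\ref{Generators}. Both horizontal transfers and both vertical maps $e^{n+1}$ are group homomorphisms, so it suffices to verify the commutativity on a generating set of $I^nK \otimes W_q(K)$. By Proposition~\ref{Generators} those generators are the Pfister forms $\pi=\langle\langle a_1,\dots,a_n,b]]=\langle\langle a_1,\dots,a_n\rangle\rangle \otimes [1,b]$ with $a_1,\dots,a_n \in F^\times$ and $b \in K$, so I would fix one such $\pi$ and chase it around the square both ways.

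Going down first, the Kato isomorphism gives $e^{n+1}(\pi)=\overline{b\,\frac{da_1}{a_1}\wedge\dots\wedge\frac{da_n}{a_n}}$ in $H_2^{n+1}(K)$. Writing $b=b_0+\alpha b_1$ with $b_0,b_1\in F$ and abbreviating $\omega=\frac{da_1}{a_1}\wedge\dots\wedge\frac{da_n}{a_n}\in\Omega_F^n$ (legitimate since each $a_i\in F^\times$), this differential decomposes as $b_0\omega+\alpha b_1\omega$ under the splitting $\Omega_K^n=\Omega_F^n\oplus\alpha\Omega_F^n$. Hence the cohomological trace, which extracts the $\alpha$-component, sends it to $\overline{b_1\,\omega}$, so the bottom-left route yields $\tr_*(e^{n+1}(\pi))=\overline{b_1\,\frac{da_1}{a_1}\wedge\dots\wedge\frac{da_n}{a_n}}$ in $H_2^{n+1}(F)$.

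Going across first, I would invoke the projection formula (Frobenius reciprocity) for the Scharlau transfer: since the bilinear factor $\langle\langle a_1,\dots,a_n\rangle\rangle$ is defined over $F$, it is $\operatorname{res}_{K/F}$ of the corresponding $F$-form, whence $\tr_*(\pi)=\langle\langle a_1,\dots,a_n\rangle\rangle_F\otimes \tr_*([1,b])$ in $I^nF\otimes W_q(F)$. It then remains to identify $\tr_*([1,b])$ modulo $IF\otimes W_q(F)$, and I claim $\tr_*([1,b])\equiv[1,b_1]\pmod{IF\otimes W_q(F)}$. Granting this, multiplying the congruence by $\langle\langle a_1,\dots,a_n\rangle\rangle_F\in I^nF$ places the difference in $I^{n+1}F\otimes W_q(F)$, so applying $e^{n+1}$ (which kills $I^{n+1}F\otimes W_q(F)$ by the characterization recalled just before the statement) gives $e^{n+1}(\tr_*(\pi))=e^{n+1}(\langle\langle a_1,\dots,a_n\rangle\rangle_F\otimes[1,b_1])=\overline{b_1\,\omega}$, matching the other route.

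The heart of the matter is thus the claim $\tr_*([1,b])\equiv[1,b_1]\pmod{IF\otimes W_q(F)}$, which is exactly the $n=0$ instance and amounts to the compatibility of the Arf invariant (the residue $e^1\colon W_q(F)\to H_2^1(F)=F/\wp(F)$) with the trace transfer; I expect this to be the main obstacle. I would settle it by direct computation: realize $\tr_*([1,b])$ as the $4$-dimensional $F$-form $\tr\circ q$ on $K^2$ with $q=[1,b]$, take the $F$-basis $\{e_1,\alpha e_1,e_2,\alpha e_2\}$, and use $\alpha^2=\alpha+a$ together with $\tr(1)=0$ and $\tr(\alpha)=1$ to evaluate the quadratic values and the alternating polar form. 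Extracting a symplectic basis then yields $\operatorname{Arf}(\tr_*([1,b]))=b_1$, and since $\operatorname{Arf}([1,b_1])=b_1$ as well, the two forms coincide in $W_q(F)/(IF\otimes W_q(F))\cong F/\wp(F)$, which is precisely the required congruence and completes the argument.
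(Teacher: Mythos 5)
Your proof is correct and takes essentially the same route as the paper: reduction to the generators of Proposition~\ref{Generators}, Frobenius reciprocity to get $\tr_*(\pi)=\langle\langle a_1,\dots,a_n\rangle\rangle\otimes\tr_*([1,b])$, and the key congruence $\tr_*([1,b])\equiv[1,\tr(b)]\pmod{IF\otimes W_q(F)}$ --- note that $\tr(b_0+\alpha b_1)=b_1$, so your claim with $b_1$ is literally the paper's congruence. The only difference is that the paper cites \cite[Lemma 34.14]{EKM} for this congruence, whereas you verify it by a direct Arf computation, which indeed goes through (e.g.\ with symplectic pairs $(e_1,\alpha e_2)$ and $((\alpha+1)e_1,e_2)$ one finds $\operatorname{Arf}(\tr_*([1,b]))=0\cdot\tr(b\alpha^2)+1\cdot\tr(b)=b_1$).
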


\begin{proof} Let $\phi\in I^n K \otimes W_q(K)$. By Proposition \ref{Generators} it suffices to verify the diagram commutativity for $\phi = \langle\langle a_1,\dots,a_n,b]]$ where $a_1,\dots,a_n \in F^\times$ and $b \in K$. By the Frobenius reciprocity \cite[Proposition 20.2]{EKM}, we have$$\tr_*(\phi)=\langle\langle a_1,\dots,a_n\rangle\rangle \otimes \tr_*([1,b]).$$Since $\tr_*([1,b])\equiv [1,\tr(b)] \pmod{IF \otimes W_q(F)}$ \cite[Lemma 34.14]{EKM}, it follows that$$\tr_*(\phi)\equiv \langle\langle a_1, \cdots, a_n, \tr(b)]] \pmod{I^{n+1}F \otimes W_q(F)},$$and thus 

\begin{eqnarray*}
e^{n+1}(\tr_*(\phi)) & = & e^{n+1}(\langle\langle a_1, \cdots, a_n, \tr(b)]]) \\&&\\ & = & \overline{\tr(b)\frac{da_1}{a_1}\wedge \cdots \wedge\frac{da_n}{a_n}} \\&&\\ & = & \tr_*\left( \overline{b\frac{da_1}{a_1}\wedge \cdots \wedge\frac{da_n}{a_n}}\right) \\&&\\ & = & \tr_*\circ e^{n+1} (\phi).
\end{eqnarray*}
\end{proof}

Since $\lambda \wedge \wp(u) \equiv \wp(\lambda\wedge u) \pmod{d\Omega_F^{m+k-1}}$, and $\lambda \wedge dv=d(\lambda\wedge v)$ for any $\lambda \in \nu_F(m)$ and arbitrary $u \in \Omega_F^k$ and $v\in \Omega_F^l$, it follows that the exterior product induces an action of $\nu_F(m)$ on the groups $H_2^{n+1}(F)$ in a natural way:
\begin{eqnarray*}
\wedge : \nu_F(m) \times H_2^{n+1}(F) & \rightarrow & H_2^{m+n+1}(F)\\
(\lambda,\overline{w}) & \mapsto & \overline{\lambda \wedge w}.
\end{eqnarray*}

Let $B$ be a biquaternion $K$-algebra such that ${\rm cor}_{K/F}(B)=0$. This condition on corestriction means that the algebra $B$ is defined over $F$ up to Brauer equivalence. Let $\varphi$ be an Albert $K$-quadratic form such that $C(\varphi)\cong M_2(B)$ (recall that an Albert form is a non singular quadratic form of dimension $6$ and trivial Arf invariant). This form $\varphi$ is unique up to scalar multiplication \cite{MammoneShapiro:1989}. Note that $e^2(\phi)=[B]$, where $[B]$ is the class of $B$ in $H^2_2(F)$.

The following proposition is the characteristic $2$ analogue of  \cite[Lemma 4.1]{Barry}:

\begin{prop}
\begin{enumerate}
\item[(1)] $\tr_*(\varphi) \in I^2 F \otimes W_q(F)$.
\item[(2)] $\forall {\lambda \in K^\times},\; e^3(\tr_*(\varphi))=e^3(\tr_*(\lambda \varphi))+\tr_*(\dlog \lambda \wedge [B])$, where $\dlog\lambda$ denotes $\frac{d \lambda}{\lambda}$ in $\nu_K(1)$ for any $\lambda \in K^{\times}$.

\item[(3)] For any $\lambda \in K^\times$, $\tr_*(\lambda \phi)=0$ if and only if $e^3(\tr_*(\varphi))=\tr_*(\dlog \lambda \wedge [B])$.
\end{enumerate}
\label{properties}
\end{prop}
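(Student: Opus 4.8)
The plan is to handle the three parts in turn, using the commutative square of \Pref{Diagram} to pass between quadratic forms and Kato--Milne cohomology, the generation result \Pref{Generators} to reduce to Pfister forms, and the Kato isomorphism to translate the resulting symbols into differential forms.

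For part (1), note first that since $\varphi$ is an Albert form its Arf invariant is trivial, so $\varphi\in IK\otimes W_q(K)$ and $e^2(\varphi)=[B]$ is defined. By the transfer inclusion quoted just before \Pref{exaseq} one has $\tr_*(\varphi)\in IF\otimes W_q(F)$, so that $e^2(\tr_*(\varphi))$ makes sense. Applying \Pref{Diagram} with $n=1$ gives $e^2(\tr_*(\varphi))=\tr_*(e^2(\varphi))=\tr_*([B])$. Under the identification $H_2^2\cong{}_2\Br$ the transfer $\tr_*$ is the corestriction, so $\tr_*([B])=\operatorname{cor}_{K/F}(B)=0$ by hypothesis. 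Hence $e^2(\tr_*(\varphi))=0$, which by the defining property of $e^2$ means precisely $\tr_*(\varphi)\in I^2F\otimes W_q(F)$.

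For part (2), the crucial algebraic fact is that $W_q$ is killed by $2$, because $\langle 1,1\rangle$ is metabolic in characteristic $2$; hence $\lambda\varphi-\varphi=\varphi+\lambda\varphi=\langle 1,\lambda\rangle\otimes\varphi=\langle\langle\lambda\rangle\rangle\otimes\varphi$, and as $\langle\langle\lambda\rangle\rangle\in IK$ this lies in $I^2K\otimes W_q(K)$. In particular $e^2(\lambda\varphi)=e^2(\varphi)=[B]$, so the argument of part (1) applies verbatim to $\lambda\varphi$ and gives $\tr_*(\lambda\varphi)\in I^2F\otimes W_q(F)$; thus $e^3(\tr_*(\varphi))$ and $e^3(\tr_*(\lambda\varphi))$ are both defined. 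The core computation is the identity $e^3(\langle\langle\lambda\rangle\rangle\otimes\varphi)=\dlog\lambda\wedge[B]$ in $H_2^3(K)$. Both sides are additive in $\varphi\in IK\otimes W_q(K)$, so by \Pref{Generators} it suffices to check it for $\varphi=\langle\langle a,b]]$ with $a\in F^\times$ and $b\in K$; there $\langle\langle\lambda\rangle\rangle\otimes\langle\langle a,b]]=\langle\langle\lambda,a,b]]$, whose image under $e^3$ is $\overline{b\,\dlog\lambda\wedge\dlog a}$ by the Kato isomorphism, and this equals $\dlog\lambda\wedge\overline{b\,\dlog a}=\dlog\lambda\wedge e^2(\langle\langle a,b]])$. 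Feeding this into \Pref{Diagram} with $n=2$ yields $e^3(\tr_*(\langle\langle\lambda\rangle\rangle\otimes\varphi))=\tr_*(\dlog\lambda\wedge[B])$; expanding the left side as $e^3(\tr_*(\varphi))+e^3(\tr_*(\lambda\varphi))$ and using that $H_2^3(F)$ is $2$-torsion gives the stated formula.

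For part (3), one implication is immediate from (2): if $\tr_*(\lambda\varphi)=0$ then $e^3(\tr_*(\lambda\varphi))=0$ and (2) gives $e^3(\tr_*(\varphi))=\tr_*(\dlog\lambda\wedge[B])$. Conversely, if $e^3(\tr_*(\varphi))=\tr_*(\dlog\lambda\wedge[B])$ then (2) forces $e^3(\tr_*(\lambda\varphi))=0$, i.e. $\tr_*(\lambda\varphi)\in I^3F\otimes W_q(F)$. Now $\tr_*(\lambda\varphi)$ is the transfer of the $6$-dimensional form $\lambda\varphi$, so its underlying $F$-dimension is $12$ and its anisotropic part has dimension at most $12$; since a nonzero anisotropic form in $I^3F\otimes W_q(F)$ has dimension at least $2^4=16$ by the Arason--Pfister Hauptsatz in characteristic $2$, and $12<16$, we conclude $\tr_*(\lambda\varphi)=0$.

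The step that needs the most care is the identity $e^3(\langle\langle\lambda\rangle\rangle\otimes\varphi)=\dlog\lambda\wedge[B]$ in part (2), since this is where the correction term $\tr_*(\dlog\lambda\wedge[B])$ is produced: one must verify that multiplication by the Pfister factor $\langle\langle\lambda\rangle\rangle$ on the form side corresponds, under the Kato isomorphism, exactly to the wedge action of $\dlog\lambda\in\nu_K(1)$ on $H_2^2(K)$, and that the reduction to generators is legitimate. The dimension count closing part (3) is the remaining point, but it is clean precisely because the Arason--Pfister threshold for $I^3F\otimes W_q(F)$ is $16$, strictly larger than the dimension $12$ of the transfer of any $6$-dimensional form.
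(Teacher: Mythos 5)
Your proof is correct and takes essentially the same route as the paper: part (1) via the commutative square of \Pref{Diagram} with $n=1$ and $\operatorname{cor}_{K/F}(B)=0$, part (2) via the decomposition $\tr_*(\varphi)=\tr_*(\lambda\varphi)+\tr_*(\langle 1,\lambda\rangle\otimes\varphi)$ combined with the diagram for $n=2$, and part (3) via the Arason--Pfister Hauptsatz applied to $\tr_*(\lambda\varphi)\in I^3F\otimes W_q(F)$. The only difference is that you make explicit two details the paper leaves implicit, namely the generator check (via \Pref{Generators}) of the identity $e^3(\langle\langle\lambda\rangle\rangle\otimes\varphi)=\dlog\lambda\wedge[B]$ and the dimension count $12<2^4$, both of which are accurate.
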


\begin{proof}
For part (1), we follow the same proof as in characteristic not $2$ using Proposition \ref{Diagram} for $n=1$. 

%
%

For Part (2), since $\tr_*(\varphi)=\tr_*(\lambda \varphi)+\tr_*(\langle 1,\lambda \rangle \otimes \varphi)$, it follows that $e^3(\tr_*(\varphi))=e^3(\tr_*(\lambda \varphi))+e^3(\tr_*(\langle 1,\lambda \rangle \otimes \varphi))$.
Since the diagram 
$$\xymatrix{
I^2 K \otimes W_q(K)\ar[r]^{\tr_*}\ar[d]^{e^3} & I^2 F \otimes W_q(F)\ar[d]^{e^3}\\
H_2^{3}(K)\ar[r]^{\tr_*} & H_2^{3}(F)}$$
is commutative, we get 
$e^3(\tr_*(\langle 1,\lambda \rangle \otimes \varphi))=\tr_*(e^3(\langle 1,\lambda \rangle \otimes \varphi))=\tr_*(\dlog \lambda \wedge e^2(\varphi))=\tr_*(\dlog \lambda \wedge [B])$.

For Part (3), let $\lambda \in K^{\times}$. We have $e^3(\tr_*(\varphi))=\tr_*(\dlog \lambda \wedge [B])$ iff $e^3(\tr_*(\lambda\varphi))=0$ iff $\tr_*(\lambda\phi)\in I^3F \otimes W_q(F)$. This is equivalent to $\tr_*(\lambda\phi)=0$ by the Hauptsatz of Arason-Pfister for non singular quadratic forms \cite[Proposition 6.4]{Laghribi:2011}.
\end{proof}

Now statement (2) of Proposition \ref{properties} allows us to attach to $B$ an invariant as follows:
\begin{defn}
The invariant $\delta_{K/F}(B)$ is the class of $e^3(\tr_*(\varphi))$ in the group 
\[
H_2^3(F)/\tr_*(\dlog K^* \wedge [B]).
\]
\end{defn}

The descent criterion of $B$ to $F$ is as follows:

\begin{thm}
The $K$-algebra $B$ has a descent to $F$ if and only if $\delta_{K/F}(B)=0$.
\end{thm}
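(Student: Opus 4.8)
The plan is to prove both implications, treating ``descent $\Rightarrow\delta_{K/F}(B)=0$'' as the easy one and concentrating the work on the converse.

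For the easy direction I would argue as follows. Suppose $B\cong B_0\otimes K$ with $B_0$ a biquaternion $F$-algebra, and let $\varphi_0$ be an Albert form of $B_0$, so $C(\varphi_0)\cong M_2(B_0)$. Since the Clifford algebra commutes with base change and the Arf invariant is preserved by restriction, $\operatorname{res}_{K/F}(\varphi_0)$ is again a $6$-dimensional form of trivial Arf invariant with $C(\operatorname{res}_{K/F}(\varphi_0))\cong M_2(B_0\otimes K)\cong M_2(B)$, hence an Albert form of $B$. By the uniqueness of the Albert form up to a scalar \cite{MammoneShapiro:1989}, $\varphi\cong c\,\operatorname{res}_{K/F}(\varphi_0)$ for some $c\in K^\times$. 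The exactness of Proposition \ref{exaseq} gives $\tr_*\circ\operatorname{res}_{K/F}=0$, so $\tr_*(c^{-1}\varphi)=\tr_*(\operatorname{res}_{K/F}(\varphi_0))=0$, and Proposition \ref{properties}(3) then yields $e^3(\tr_*(\varphi))=\tr_*(\dlog c^{-1}\wedge[B])\in\tr_*(\dlog K^*\wedge[B])$; that is, $\delta_{K/F}(B)=0$.

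For the converse, assume $\delta_{K/F}(B)=0$. I would first dispose of the cases $\operatorname{ind}(B)\le 2$: if $B$ is split then it descends trivially, and if $\operatorname{ind}(B)=2$ then $B\cong M_2(Q)$ for a quaternion $K$-algebra $Q$, where $\operatorname{cor}_{K/F}(B)=0$ forces $\operatorname{cor}_{K/F}(Q)\sim_{Br}F$, so $Q$ descends by \cite[Chapter X, Theorem 21]{Albert:1968} and hence so does $B$. Thus we may assume $B$ is division, equivalently that $\varphi$ is anisotropic. Now $\delta_{K/F}(B)=0$ together with Proposition \ref{properties}(3) gives $\lambda\in K^\times$ with $\tr_*(\lambda\varphi)=0$. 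As $\lambda\varphi\in I^2K\otimes W_q(K)$ (scaling is the $W(K)$-action and preserves both dimension and the trivial Arf invariant), the exactness of Proposition \ref{exaseq} for $n=2$ produces $\psi\in I^2F\otimes W_q(F)$ with $\operatorname{res}_{K/F}(\psi)=\lambda\varphi$ in $W_q(K)$.

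The heart of the proof is to replace the Witt class $\psi$ by an honest $6$-dimensional Albert form over $F$. Writing $\psi$ anisotropic of dimension $6+2r$, its anisotropic part over $K$ is $\lambda\varphi$, so its Witt index over $K$ equals $r$. Using the description of isotropy over a separable quadratic extension in characteristic two, I would extract a subform $\gamma\otimes[1,a]$ with $\gamma$ bilinear of dimension $r$, writing $\psi\cong(\gamma\otimes[1,a])\perp\psi_0$ with $\psi_0$ anisotropic of dimension $6$ and $(\psi_0)_K$ anisotropic; since $\gamma\otimes[1,a]$ is hyperbolic over $K$ this gives $(\psi_0)_K\cong\lambda\varphi$. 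A short Arf computation (using $\operatorname{Arf}(\gamma\otimes[1,a])=r\,a$ and $\operatorname{Arf}(\psi)=0$) shows $\operatorname{Arf}(\psi_0)=r\,a$; when $r$ is odd this must be corrected to $0$ by adjusting one binary slot of $\psi_0$ inside its $K$-isometry class, which is possible because $a\in\wp(K)$. Once $\psi_0$ is a $6$-dimensional form of trivial Arf invariant, it is an Albert form, so $C(\psi_0)\cong M_2(B_0)$ for a biquaternion $F$-algebra $B_0$; base-changing and using $(\psi_0)_K\cong\lambda\varphi$ with $C(\lambda\varphi)\cong M_2(B)$ (as $e^2(\lambda\varphi)=e^2(\varphi)=[B]$) yields $M_2(B_0\otimes K)\cong M_2(B)$, whence $B\cong B_0\otimes K$.

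I expect the main obstacle to be precisely this dimension-$6$ descent of the form. Equivalently, with $[A]=e^2(\psi)\in{}_2\Br(F)$ one has $\operatorname{res}_{K/F}[A]=[B]$ and $\operatorname{ind}(A)\in\{4,8\}$, and the real content is to rule out $\operatorname{ind}(A)=8$; this is exactly the configuration of the non-descending example of the introduction (an indecomposable degree-$8$, exponent-$2$ algebra containing $K$), so the argument must genuinely exploit that some scalar multiple $\lambda\varphi$ lies in the image of $\operatorname{res}_{K/F}$. The subsidiary difficulties, namely justifying the extraction of the $[1,a]$-divisible subform and the Arf-invariant correction when $r$ is odd, are where the characteristic-two quadratic form theory over separable quadratic extensions does the real work.
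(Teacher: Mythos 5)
Your overall architecture is the paper's: both directions are routed through Proposition \ref{properties}(3) (``$\delta_{K/F}(B)=0$ iff $\tr_*(\lambda\varphi)=0$ for some $\lambda\in K^\times$''), the easy direction via Mammone--Shapiro uniqueness of the Albert form and $\tr_*\circ\operatorname{res}_{K/F}=0$, and the converse via the Aravire--Baeza exact sequence followed by a descent of the $6$-dimensional form with an Arf correction. However, the converse contains a genuine error at its first step. After reducing to $B$ division you assert $\lambda\varphi\in I^2K\otimes W_q(K)$ on the grounds that scaling preserves the trivial Arf invariant, and then apply Proposition \ref{exaseq} with $n=2$. Trivial Arf invariant only gives $\lambda\varphi\in IK\otimes W_q(K)$; membership in $I^2K\otimes W_q(K)$ is equivalent, via $e^2$, to the vanishing of the Clifford invariant, and $e^2(\lambda\varphi)=e^2(\varphi)=[B]\neq 0$ precisely in the division case you have just reduced to. Indeed, if some $\psi\in I^2F\otimes W_q(F)$ satisfied $\operatorname{res}_{K/F}(\psi)=\lambda\varphi$ in $W_q(K)$, then $[B]=\operatorname{res}_{K/F}(e^2(\psi))=0$, a contradiction; so the step fails in exactly the main case. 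The paper instead applies the exact sequence with $n=1$, obtaining $\psi\in IF\otimes W_q(F)$ with $\psi_K$ Witt equivalent to $\lambda\varphi$ and with no constraint on $e^2(\psi)$. Note also the internal inconsistency this creates in your last paragraph, where you set $[A]=e^2(\psi)$ and treat it as possibly of index $8$, even though your own $\psi\in I^2F\otimes W_q(F)$ would force $[A]=0$.

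Once the index is corrected to $n=1$, the rest of your argument tracks the paper's proof. Where you extract a subform $\gamma\otimes[1,a]$ from $\psi$ by hand, the paper simply invokes the excellence of the separable quadratic extension $K/F$ to replace $\psi$ by a $6$-dimensional form $\phi_0$ with $(\phi_0)_K\cong\lambda\varphi$ (your extraction is in effect a sketch of that excellence property and would need a precise reference or proof in characteristic $2$); your Arf correction by $a$ in one binary slot, legitimate over $K$ because $a\in\wp(K)$, is exactly the paper's replacement of $b_3$ by $b_3+\epsilon a$; and the conclusion via $C(\phi_0)=M_2(B_0)$ and dimension count is identical. Your closing worry about having to ``rule out $\operatorname{ind}(A)=8$'' is an artifact of the same index slip: in the corrected argument the descended object is the $6$-dimensional form itself, whose Clifford algebra is automatically $M_2$ of a biquaternion $F$-algebra, so no such obstruction arises --- the hypothesis $\delta_{K/F}(B)=0$ has been fully spent in producing $\lambda$ with $\tr_*(\lambda\varphi)=0$. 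Your preliminary reduction to $B$ division (via Albert's descent theorem for quaternions when $\operatorname{ind}(B)\le 2$) is correct but unnecessary, since excellence plus padding by hyperbolic planes handles all indices uniformly.
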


\begin{proof} We proceed as in \cite{Barry}. By Proposition \ref{properties}, $\delta_{K/F}(B)=0$ if and only if there exists $\lambda \in K^{\times}$ such that $\tr_*(\lambda \phi)=0$. 

Suppose that $B$ has a descent to $F$, this means that there exists a biquaternion $F$-algebra $B_0$ such that $B\cong B_0\otimes K$. Let $\phi_0$ be an Albert quadratic form over $F$ such that $C(\phi_0)=M_2(B_0)$. Then, $\phi$ and $(\phi_0)_K$ have the same Clifford invariant. It follows from \cite{MammoneShapiro:1989} that $\phi\cong \lambda (\phi_0)_K$ for a suitable $\lambda \in K^{\times}$. Hence, $\tr_*(\lambda \phi)=0$.

Conversely, suppose that $\tr_*(\lambda \phi)=0$ for some $\lambda\in K^{\times}$. By Proposition \ref{exaseq}, in the case $n=1$, there exists $\phi_0 \in I F \otimes W_q(F)$ such that $\lambda \phi$ is Witt equivalent to $(\phi_0)_K$. Since the extension $K/F$ is excellent, we may suppose that $\lambda \phi \cong (\phi_0)_K$. Let us write $\phi_0=a_1[1,b_1]\perp a_2[1,b_2]\perp a_3[1, b_3]$. The Arf invariant of $(\phi_0)_K$ is trivial, hence $b_1+b_2+b_3+\epsilon a \in \wp(F)$ for $\epsilon \in \{0, 1\}$ (recall that $K=F[\alpha]$ where $\wp(\alpha)=a \in F \setminus \wp(F)$). We have $(\phi_0)_K\cong (a_1[1,b_1]\perp a_2[1,b_2]\perp a_3[1, b_3+\epsilon a])_K$, and thus we may suppose that $\phi_0$ is an Albert form. Using the Clifford algebra, we get that $B$ is Brauer equivalent to $(B_0)_K$, where $B_0$ is the biquaternion $F$-algebra satisfying $C(\phi_0)=M_2(B_0)$. By dimension count we deduce that $B \cong B_0\otimes K$.
\end{proof}

\begin{rem}
Let $A$ be a central simple algebra of degree $8$ and exponent $2$ over $F$. Suppose that $K$ is contained in $A$. Denote by $B = C_A K$ the centralizer of $K$ in $A$. The algebra $A$  admits a decomposition of the form $A\cong [a, a')\otimes A'$, for some $a' \in F$ and some subalgebra $A' \subset A$ over $F$, if and only if $B$ has a descent to $F$. That is, $A$ admits such a decomposition  if and only if $\delta_{K/F}(B) = 0$. Hence, $\delta_{K/F}(B)\ne 0$ if $A$ is indecomposable.
\end{rem}
In the following example we show that there exists a decomposable  algebra of degree $8$ having $K$ as subfield such that the centralizer of $K$, a biquaternion algebra over $K$, has no descent to $F$.
\begin{exmpl}
Let $D$ be an indecomposable algebra of degree $8$ and exponent $2$ over $F$. Suppose that $K$ is contained in $D$. It is well-known that $M_2(D)$ is a tensor product of four quaternion algebras. The proof of this fact given in  \cite[Thm. 5.6.38]{Jacobson1996} shows that one may find quaternion algebras $Q_1, Q_2, Q_3, Q_4$ over $F$ such that $M_2(D) \cong Q_1 \otimes Q_2 \otimes Q_3 \otimes Q_4$ with  $K\subset Q_4$. Setting $A=Q_1 \otimes Q_2 \otimes Q_3$, we may check that $A$ is a division algebra.  Since $D_K$ is of index $4$, the index of $A_K$ is also $4$. So $A$ contains a subfield isomorphic to $K$. A result of Merkurjev shows that there is  no quaternion subalgebra of $A$ containing $K$, see \cite[Cor. 4.5]{Barry16} for char$(F)\ne 2$. The characteristic $2$ case works exactly by the same arguments. Therefore, if $B$ is the centralizer of $K$ in $A$, the invariant $\delta_{K/F}(B)$ is not trivial.
\end{exmpl}

\section{Indecomposable algebras in cohomological dimension $3$}
Recall that the cohomological 2-dimension $\operatorname{cd}_2(\ell)$ of a field $\ell$ is by definition the smallest integer such that for every $n> \operatorname{cd}_2(\ell)$ and every finite field extension $L/\ell$ we have $H_2^n(L)=0$. This latter equality holds if and only if $I^{n-1}L \otimes W_qL=0$ (see \cite[Fact 16. 2]{EKM}). 
Note that in \cite[Section 101]{EKM} the group is denoted by $H^{n,n-1}(L,\mathbb{Z}/2\mathbb{Z})$, but in the case of fields of characteristic $2$, $H^{n,n-1}(L,\mathbb{Z}/2\mathbb{Z})=H_2^n(L)$. See also \cite{Izhboldin:2000} and \cite[P. 152]{GaribaldiMerkurjevSerre}.

In this section, we  construct an example  of an indecomposable algebra of exponent $2$ and degree $8$  over a field of  $2$-cohomological dimension $3$. Notice that  every central simple algebra of exponent $2$ over $F$ decomposes into tensor product of quaternion algebras if $\operatorname{cd}_2(F)= 2$ (see \cite{Kahn} if char$(F)\ne 2$ and \cite{BarChap15} if char$(F)=2$). Hence, as it is the case in characteristic different from $2$ \cite{Barry},  this example shows that $3$ is the lower bound  of  the $2$-cohomological dimension for the existence of  indecomposable algebras of exponent $2$. 

Let $A$ be a central simple algebra over $F$. Recall that $\operatorname{TCH}^2(\operatorname{SB}(A))$ denotes the torsion in the Chow group of cycles of codimension $2$ over the Severi-Brauer variety $\operatorname{SB}(A)$  modulo rational equivalences.  If $A$ is of prime exponent $p$ and index $p^n$ (except the case $p = 2 = n$) Karpenko shows in \cite[Proposition 5.3]{Kar98} that $A$ is indecomposable if $\operatorname{TCH}^2(\operatorname{SB}(A))\ne 0$. Examples of such indecomposable algebras, independently to the characteristic of the base field, are given in \cite[Corollary 5.4]{Kar98}. The following result appeared in  \cite[Theorem 1.3]{Barry} under the assumption char$(F)\ne 2$:
\begin{thm} \label{DimCoh}
Let $A$ be a central simple algebra of degree $8$ and exponent $2$ over $F$ such that $\operatorname{TCH}^2(\operatorname{SB}(A)) \ne 0$. There exists an extension $\mathsf M$ of $F$ with $\operatorname{cd}_2(\mathsf M)=3$ such that $A_{\mathsf M}$ is indecomposable.
\end{thm}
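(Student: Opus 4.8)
The plan is to realize $\mathsf M$ as the colimit of a tower of function-field extensions that drives $I^3L\otimes W_q(L)$ to $0$ for every finite $L/\mathsf M$ (equivalently $\operatorname{cd}_2(\mathsf M)\le 3$), yet is mild enough to keep $\operatorname{TCH}^2(\operatorname{SB}(A))$ alive; Karpenko's criterion \cite[Proposition 5.3]{Kar98} then converts the surviving torsion into indecomposability, and indecomposability forces $\operatorname{cd}_2(\mathsf M)\ge 3$, pinning the value at $3$. Throughout I treat $A$ as a division algebra of index $8$: index $1$ or $2$ makes $\operatorname{TCH}^2(\operatorname{SB}(A))$ vanish, index $4$ is Karpenko's excluded case $p=2=n$ (where such an algebra is anyway a product of quaternions), so only index $8=2^3$ is relevant and it alone can yield an indecomposable algebra.

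First I would construct a tower $F=F_0\subseteq F_1\subseteq\cdots$. Given $F_i$, I let $F_{i+1}$ be the free compositum (formed inside a fixed separable closure, or as the function field of the product of the relevant quadrics) of the function fields $E(X_q)$, where $E$ ranges over the finite subextensions of $F_i/F$ and $q=\langle\langle a_1,a_2,a_3,b]]$ ranges over the anisotropic $3$-fold quadratic Pfister forms over $E$, with $X_q$ the associated quadric. Put $\mathsf M=\bigcup_i F_i$. A routine specialization argument shows that every $3$-fold quadratic Pfister form over every finite extension $L/\mathsf M$ becomes isotropic, hence hyperbolic; since such forms generate $I^3L\otimes W_q(L)$ by \Pref{Generators}, we get $I^3L\otimes W_q(L)=0$ for all finite $L/\mathsf M$. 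By the identification $H_2^n(L)=0 \iff I^{n-1}L\otimes W_q(L)=0$ recalled at the start of this section (and $I^{n-1}\subseteq I^3$ for $n\ge 4$), this yields $H_2^n(L)=0$ for all $n\ge 4$ and all finite $L/\mathsf M$, i.e. $\operatorname{cd}_2(\mathsf M)\le 3$.

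The decisive point is that this tower does not kill the torsion, and here \Pref{Injection} does the work. Each elementary step from $E$ to $E(X_q)$, with $q$ a $3$-fold quadratic Pfister form, induces by \Pref{Injection} an injection $\operatorname{TCH}^2(\operatorname{SB}(A_E))\hookrightarrow \operatorname{TCH}^2(\operatorname{SB}(A_{E(X_q)}))$, and in particular leaves $A$ a division algebra of index $8$. Because the Chow groups (and their torsion subgroups) of a fixed variety commute with filtered colimits of the base field, composing these injections along the tower produces an injection $\operatorname{TCH}^2(\operatorname{SB}(A))\hookrightarrow \operatorname{TCH}^2(\operatorname{SB}(A_{\mathsf M}))$. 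As $\operatorname{TCH}^2(\operatorname{SB}(A))\ne 0$ by hypothesis, $\operatorname{TCH}^2(\operatorname{SB}(A_{\mathsf M}))\ne 0$, and $A_{\mathsf M}$ still has degree $8$, exponent $2$ and index $8$. I expect this to be the main obstacle: one must check that the very extensions chosen to shrink the cohomological dimension—function fields of $3$-fold quadratic Pfister quadrics, which correspond to $H_2^4$—are harmless for the codimension-$2$ torsion and for the index of $A$. This is exactly the content \Pref{Injection} has to supply, and it is where a careless choice of splitting fields (for instance a generic splitting field of $A$ itself) would be fatal.

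Finally I would close the loop. Since $A_{\mathsf M}$ is a division algebra of exponent $2$ and index $8=2^3$ with $\operatorname{TCH}^2(\operatorname{SB}(A_{\mathsf M}))\ne 0$, \cite[Proposition 5.3]{Kar98} (applied with $p=2$, $n=3$, so the forbidden case $p=2=n$ does not arise) shows that $A_{\mathsf M}$ is indecomposable. On the other hand, over any field of $2$-cohomological dimension at most $2$ every central simple algebra of exponent $2$ is a tensor product of quaternion algebras \cite{BarChap15}; the indecomposability of $A_{\mathsf M}$ therefore excludes $\operatorname{cd}_2(\mathsf M)\le 2$, giving $\operatorname{cd}_2(\mathsf M)\ge 3$. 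Combined with the upper bound from the tower, this gives $\operatorname{cd}_2(\mathsf M)=3$, which is precisely the assertion of \Tref{DimCoh}.
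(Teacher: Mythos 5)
Your overall architecture is the same as the paper's (a tower of function fields of the quadrics of the generators of $I^3\otimes W_q$, Karpenko's criterion \cite[Proposition 5.3]{Kar98} to convert surviving torsion into indecomposability, and the lower bound $\operatorname{cd}_2(\mathsf M)\ge 3$ from \cite{BarChap15}), but there are two genuine gaps. First, your upper bound $\operatorname{cd}_2(\mathsf M)\le 3$ does not follow from the tower as you define it. You adjoin function fields only over finite subextensions $E$ with $F\subseteq E\subseteq F_i$; but a finite extension $L/\mathsf M$ has the form $E'\mathsf M$ where $E'$ is a finite extension of some $F_i$ that need \emph{not} embed into the tower, so anisotropic forms over such $L$ are never killed at any stage, and no ``routine specialization argument'' yields $I^3L\otimes W_q(L)=0$ for all finite $L/\mathsf M$. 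This is precisely why Lemma \ref{lemDimCoh} interleaves the fixed fields of Sylow $2$-subgroups of the absolute Galois group: the union $\mathsf M$ is then $2$-special (\cite[Proposition 101.16]{EKM}), and by \cite[Example 101.17]{EKM} the single vanishing $H_2^4(\mathsf M)=0$ over $\mathsf M$ itself already forces $\operatorname{cd}_2(\mathsf M)<4$. Omitting the $2$-special steps is not a simplification you can afford; and once they are included, you also need Karpenko's odd-degree injectivity \cite[Corollary 1.2 and Proposition 1.3]{Kar98} for $\TCH^2$ along those steps, which your proof never supplies.

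Second, you attribute the decisive injectivity $\TCH^2(\operatorname{SB}(A_E))\hookrightarrow\TCH^2(\operatorname{SB}(A_{E(X_q)}))$ to \Pref{Injection}, but that proposition says nothing of the kind: it concerns the restriction map $H^3_2(F)/\tr_*(\dlog K^{\times}\wedge[B])\to H^3_2(F(t))/\dlog F(t)^{\times}\wedge[A']$ for a rational function field and a twisted algebra, not torsion in $\CH^2$ of Severi--Brauer varieties under function fields of quadrics. The content you need is exactly what the paper's proof invokes: \Tref{thtor} (the entire appendix, proving that $\CH^2(X_\psi)$ is torsion free in characteristic $2$, the analogue of \cite[Theorem 6.1]{Kar91}) combined with Merkurjev's Chow-group computation \cite[Theorem 6.7]{Barry}. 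Without these, the ``decisive point'' of your argument is unsupported; with them, your colimit argument for passing to $\mathsf M$ is fine. A minor but relevant slip: $\langle\langle a_1,a_2,a_3,b]]$ is a $4$-fold (16-dimensional) quadratic Pfister form in the paper's terminology, not $3$-fold; you kill the correct generators of $I^3L\otimes W_qL$, but the mislabel matters when one checks, as the paper does via the separation theorem \cite[Theorem 26.5]{EKM}, that $8$-dimensional anisotropic forms survive these function fields.
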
     
For the proof we first prove:
\begin{lem}\label{lemDimCoh}
Let $F$ be a field of char$(F)=2$ such that $\operatorname{cd}_2(F)> 2$. Then there exists an extension $\mathsf M$ of $F$ such that $\operatorname{cd}_2(\mathsf M)=3$, $\mathsf M$ is 2-special, and the restriction of any anisotropic quadratic 3-fold Pfister form over $F$ to $\mathsf M$ is anisotropic.
\end{lem}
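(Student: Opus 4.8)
The plan is to construct the field $\mathsf{M}$ as a direct limit (compositum) obtained by a standard transfinite process of making $F$ $2$-special while carefully preserving the anisotropy of a chosen set of $3$-fold Pfister forms. Recall that a field is $2$-special if every finite extension has degree a power of $2$; such a field is obtained as the fixed field of a pro-$2$-Sylow subgroup of the absolute Galois group, equivalently as the union of all odd-degree extensions. First I would take the maximal odd-degree extension $F'$ of $F$ inside a separable closure. By the Springer-type theorem for quadratic forms in characteristic $2$ (odd-degree extensions preserve anisotropy of nonsingular quadratic forms, in particular of anisotropic $3$-fold quadratic Pfister forms), every anisotropic $3$-fold Pfister form over $F$ remains anisotropic over $F'$, and moreover $\operatorname{cd}_2(F')=\operatorname{cd}_2(F)$ since $2$-cohomological dimension is unchanged by passing to the pro-$2$ part of the Galois group. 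This already yields a $2$-special field preserving the required anisotropy.

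The remaining task is to force $\operatorname{cd}_2(\mathsf{M})$ to equal exactly $3$ rather than the possibly larger value $\operatorname{cd}_2(F)>2$. Here I would iterate a Merkurjev-style construction: enumerate all anisotropic quadratic $m$-fold Pfister forms for $m\ge 4$ over the current field, and successively adjoin generic splitting fields (function fields of the associated quadrics) to kill every element of $H_2^m(\cdot)$ for $m\ge 4$, while at each stage taking the odd-degree closure again to stay $2$-special. The key point, following the characteristic-not-$2$ argument of Barry, is to interleave these two operations in a transfinite tower $F=\mathsf{M}_0\subset \mathsf{M}_1\subset\cdots$ so that in the limit $\mathsf{M}$: (i) $H_2^m(\mathsf{M})=0$ for all $m\ge 4$, giving $\operatorname{cd}_2(\mathsf{M})\le 3$; (ii) some anisotropic $3$-fold Pfister form survives, giving $\operatorname{cd}_2(\mathsf{M})\ge 3$; and (iii) every anisotropic $3$-fold Pfister form over $F$ stays anisotropic.

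The heart of the matter — and the step I expect to be the main obstacle — is ensuring that the generic splitting fields used to annihilate higher cohomology do \emph{not} make any of our chosen anisotropic $3$-fold Pfister forms isotropic. The control tool is the characteristic-$2$ analogue of the standard quadratic-form results on function fields of quadrics: a nonsingular quadratic form $q$ stays anisotropic over the function field $\ell(\psi)$ of an anisotropic form $\psi$ provided $q$ is not a ``subform'' in the relevant sense and $\dim\psi$ is larger than the first Witt index governs; concretely, one invokes the separation and domination theorems (Hoffmann-type results, as developed for characteristic $2$ in the quadratic-form literature) to guarantee that adjoining $\ell(\psi)$ for an anisotropic Pfister form $\psi$ of fold $\ge 4$ cannot split a $3$-fold anisotropic Pfister form. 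I would verify that at each stage the forms we adjoin have fold strictly greater than $3$, so the dimension and subform obstructions prevent collapse of our $3$-fold forms, and then pass to the limit. The two easier ingredients — that odd-degree extensions preserve anisotropy of $3$-fold quadratic Pfister forms in characteristic $2$, and that the cohomological vanishing $H_2^m=0$ for $m\ge 4$ translates into $\operatorname{cd}_2\le 3$ via the stated equivalence $H_2^n(L)=0\Leftrightarrow I^{n-1}L\otimes W_qL=0$ — follow directly from the results quoted earlier in the section and from Aravire-Baeza.
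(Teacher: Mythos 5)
Your proposal is correct and follows essentially the same route as the paper: a Merkurjev-style tower alternating $2$-specialization (fixed field of a Sylow $2$-subgroup, harmless by Springer's theorem in characteristic $2$) with function fields of higher-fold quadratic Pfister forms, using Hoffmann's separation theorem to keep the $3$-fold Pfister forms anisotropic, and the equivalence $H_2^n(L)=0\Leftrightarrow I^{n-1}L\otimes W_qL=0$ together with $2$-specialness to get $\operatorname{cd}_2(\mathsf M)\le 3$. The only cosmetic difference is that you kill all $m$-fold Pfister forms for $m\ge 4$, whereas the paper kills only the $4$-fold ones, which already suffices since $I^m L\otimes W_qL\subseteq I^3L\otimes W_qL$ for $m\ge 3$.
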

\begin{proof}
Following a construction due to Merkurjev, we define inductively  a tower of fields
\[
F=F_0\subset F_1\subset\cdots \subset F_{\infty}=\bigcup_iF_i =:\mathsf M
\]
where the field $F_{2i+1}$ is the subfield of the separable closure of $F_{2i}$ consisting of elements invariant by a fixed Sylow $2$-subgroup of the absolute Galois group of $F_{2i}$. By \cite[Proposition 101.16]{EKM} $F_{2i+1}$ is 2-special, i.e., every finite field extension of $F_{2i+1}$ is of degree a power of 2. The field $F_{2i+2}$ is the composite of all the function fields  $F_{2i+1}(\psi)$ for $\psi$ ranging over all $4$-fold Pfister forms over $F_{2i+1}$. Since $I^3 \mathsf M \otimes W_q(M)=0$ by construction,  we have $H_2^4(\mathsf M)=0$. Furthermore, the field $\mathsf M$ is $2$-special, so it follows by \cite[Example 101.17]{EKM} that $\operatorname{cd}_2(\mathsf M)<4$. Anisotropic quadratic 3-fold Pfister forms over $F$ are split by neither odd degree extensions (Springer's theorem, \cite[Corollary 18.5]{EKM}) nor function fields of 4-fold Pfister forms (The separation theorem, \cite[Theorem 26.5]{EKM}), and so their restrictions to $M$ are anisotropic. Therefore, $\operatorname{cd}_2(\mathsf M)=3$.
\end{proof}
\begin{proof}[Proof of Theorem \ref{DimCoh}]
Since the theorem was already proven for char$(F)\ne 2$, we may assume char$(F) = 2$. Notice that $\operatorname{cd}_2(F)> 2$ since $A$ is indecomposable.  Let $\mathbb F$ be an odd degree extension of $F$. It is shown in \cite[ Corollary 1.2 and Proposition 1.3]{Kar98} that the scalar extension map
\[
\operatorname{TCH}^2(\operatorname{SB}(A)) \longrightarrow \operatorname{TCH}^2(\operatorname{SB}(A)_{\mathbb F})
\]
is an injection. On the other hand, let $\psi$ be a $4$-fold Pfister quadratic form over $\mathbb F$ and let $X_\psi$ be the quadric defined by $\psi$.  We show in Theorem \ref{thtor} that $\operatorname{CH}^2(X_\psi)$ is torsion free as it is the case in characteristic different from 2 by Karpenko \cite[Theorem 6. 1]{Kar91}. Hence,  Merkurjev's Chow group computations in \cite[Theorem 6.7]{Barry} show that the scalar extension map
\[
\operatorname{TCH}^2(\operatorname{SB}(A)_{\mathbb F})  \longrightarrow \operatorname{TCH}^2(\operatorname{SB}(A)_{\mathbb F(X_\psi)}) 
\]    
is injective. 

Now, let $\mathsf M/F$ be an extension with $\operatorname{cd}_2(\mathsf M)\le3$ constructed  as in Lemma \ref{lemDimCoh}. The two latter injections show that   $\operatorname{TCH}^2(\operatorname{SB}(A)_{\mathsf M}) \ne 0$. Consequently, the algebra $A_{\mathsf M}$ is indecomposable and $\operatorname{cd}_2(\mathsf M)=3$. 
\end{proof}
This theorem allows to construct an example of a biquaternion algebra with nontrivial invariant over a field of $2$-cohomological dimension $3$:
By Theorem \ref{DimCoh}, every indecomposable algebra of degree $8$ and exponent $2$ can be scalar extended to an indecomposable algebra over a field of cohomological $2$-dimension $3$. Since indecomposable algebras of degree $8$ and exponent $2$ exist, they exist also over fields $\mathsf M$ with $\operatorname{cd}_2(\mathsf M)=3$. Let $A$ be such an indecomposable algebra over $\mathsf M$.  Let $\mathsf K\subset A$ be a separable quadratic extension of $\mathsf M$, and $B=C_A{\mathsf K}$ the centralizer of $\mathsf K$ in $A$.  Then the  invariant $\delta_{\mathsf K/\mathsf M}(B)$ is not trivial. 

\section{The behavior under odd degree field extensions}

Let $\F$ be an extension of $F$ of odd degree and $\K=\F(\alpha)$ (recall that $K=F[\alpha]$, where $\wp(\alpha)=a \in F \setminus \wp(F)$). Let $B$ be a biquaternion $K$-algebra such that ${\rm cor}_{K/F}(B)=0$. As before let $\varphi$ be an Albert quadratic form over $K$ such that $C(\varphi)\cong M_2(B)$. Our aim is to prove the following proposition whose analogue in characteristic not $2$ is a consequence of \cite[Proposition 4.7]{Barry}.

\begin{prop} If $\delta_{\K/\F}(B_{\K})=0$, then $\delta_{K/F}(B)=0$.
\label{injoddext}
\end{prop}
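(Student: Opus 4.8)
The plan is to realize $\delta_{\K/\F}(B_\K)$ as the image of $\delta_{K/F}(B)$ under a restriction map on the quotient groups defining the invariant, and then to show that this restriction map is injective. Write $S_F=\tr_*(\dlog K^\times\wedge[B])\subseteq H_2^3(F)$ and $S_\F=\tr_*(\dlog \K^\times\wedge[B_\K])\subseteq H_2^3(\F)$, so that the two invariants live in $H_2^3(F)/S_F$ and $H_2^3(\F)/S_\F$. Since $[\F:F]$ is odd while $[K:F]=2$, the degrees are coprime, so $\K=\F\otimes_F K=\F(\alpha)$ is a field, $\F/F$ and $\K/K$ are separable of odd degree (odd-degree extensions in characteristic $2$ are automatically separable), and $\K/\F$ is a separable quadratic extension obtained from the same $a\in F^2$. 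Moreover $\varphi_\K=\res[\K/K](\varphi)$ is an Albert form with $C(\varphi_\K)\cong M_2(B_\K)$, so it is a legitimate choice for computing $\delta_{\K/\F}(B_\K)$.

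First I would record the base-change identity
\[
\res[\F/F]\circ\tr_*^{K/F}=\tr_*^{\K/\F}\circ\res[\K/K]
\]
on $\Omega^m$, hence on $W_q$ and on $H_2^{m+1}$. This is immediate from the decompositions $\Omega_K^m=\Omega_F^m\oplus\alpha\Omega_F^m$ and $\Omega_\K^m=\Omega_\F^m\oplus\alpha\Omega_\F^m$: restriction preserves the $\alpha$-grading because the generator $\alpha$ is common, and both traces are projection onto the $\alpha$-component. Combining this with the naturality of $e^3$ under restriction and with $[B_\K]=\res[\K/K][B]$ gives, on one hand, $\res[\F/F](S_F)\subseteq S_\F$, so that restriction descends to a map $\rho\colon H_2^3(F)/S_F\to H_2^3(\F)/S_\F$, and on the other hand
\[
\res[\F/F]\bigl(e^3(\tr_*(\varphi))\bigr)=e^3\bigl(\tr_*(\varphi_\K)\bigr),
\]
that is, $\rho(\delta_{K/F}(B))=\delta_{\K/\F}(B_\K)$. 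Thus the proposition reduces to the injectivity of $\rho$.

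For injectivity I would use the transfer $\cores[\F/F]\colon H_2^3(\F)\to H_2^3(F)$ attached to the separable extension $\F/F$. Because $H_2^3(F)$ is $2$-torsion and $[\F:F]$ is odd, $\cores[\F/F]\circ\res[\F/F]=[\F:F]\cdot\mathrm{id}=\mathrm{id}$ on $H_2^3(F)$. It therefore suffices to prove $\cores[\F/F](S_\F)\subseteq S_F$: then any $\xi\in H_2^3(F)$ with $\res[\F/F](\xi)\in S_\F$ satisfies $\xi=\cores[\F/F]\,\res[\F/F](\xi)\in\cores[\F/F](S_\F)\subseteq S_F$, forcing its class to vanish. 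To control $S_\F$, I would identify the quadratic trace $\tr_*$ with corestriction and invoke transitivity of transfers, $\cores[\F/F]\circ\tr_*^{\K/\F}=\tr_*^{K/F}\circ\cores[\K/K]$ (both equal the transfer $\K\to F$). For $\mu\in\K^\times$ the projection formula then gives
\[
\cores[\K/K]\bigl(\dlog\mu\wedge[B_\K]\bigr)=\cores[\K/K]\bigl(\dlog\mu\wedge\res[\K/K][B]\bigr)=\cores[\K/K](\dlog\mu)\wedge[B],
\]
and since corestriction on $\nu(1)$ corresponds to the field norm, $\cores[\K/K](\dlog\mu)=\dlog\bigl(\Norm[\K/K](\mu)\bigr)$ with $\Norm[\K/K](\mu)\in K^\times$. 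Hence $\cores[\F/F]\bigl(\tr_*(\dlog\mu\wedge[B_\K])\bigr)=\tr_*\bigl(\dlog(\Norm[\K/K](\mu))\wedge[B]\bigr)\in S_F$, which is exactly $\cores[\F/F](S_\F)\subseteq S_F$.

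The main obstacle is the last step: everything hinges on having transfer maps on $H_2^\bullet$ for the odd-degree (automatically separable) extension $\F/F$ that are compatible with the paper's trace maps $\tr_*$ for the quadratic extensions, and that satisfy transitivity, the projection formula, and $\cores\circ\res=\times[\F:F]$. Identifying $\tr_*$ with corestriction and pinning down these compatibilities directly in the Kato--Milne setting (rather than only in $W_q$, where Frobenius reciprocity is already available) is the delicate point. By contrast, the base-change identity of the second paragraph is a formal consequence of the common $\alpha$-grading and needs no extra input.
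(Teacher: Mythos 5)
Your proposal is correct in outline but takes a genuinely different route from the paper. The paper does not argue inside characteristic $2$ at all: it lifts the whole situation to characteristic $0$ over a Henselian discrete valuation ring $A$ with residue field $F$ and maximal ideal $2A$, using Wadsworth's results on Arf invariants of reductions, Corollary \ref{corlif} to lift forms in $I^nF\otimes W_q(F)$ to $I^{n+1}A$, Knebusch's Henselian lifting of Witt equivalences, and the excellence of $K/F$; it then invokes the known characteristic-not-$2$ statement \cite[Proposition 4.7]{Barry} over the lifted fields $E''$ and $E'$, and descends back via Baeza's theory of quadratic forms over semilocal rings. You instead stay in characteristic $2$ and run the standard restriction--corestriction argument, which, if completed, proves the \emph{stronger} statement that $H_2^3(F)/\tr_*(\dlog K^\times\wedge[B])\to H_2^3(\F)/\tr_*(\dlog \K^\times\wedge[B_\K])$ is injective --- the direct analogue of Barry's Proposition 4.7, rather than only the vanishing implication. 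Your formal steps are all sound: the commutation $\operatorname{res}_{\F/F}\circ\tr_* = \tr_*\circ\operatorname{res}_{\K/K}$ does follow from the common $\alpha$-grading (odd-degree extensions in characteristic $2$ are separable, so a $2$-basis of $F$ stays one of $\F$), the reduction of injectivity to $\operatorname{cor}_{\F/F}(S_\F)\subseteq S_F$ is correct, and the projection-formula computation with $\operatorname{cor}(\dlog\mu)=\dlog(\operatorname{N}_{\K/K}(\mu))$ is the right identity. The point you flag as delicate is real but fillable from the literature rather than a dead end: one can identify $H_2^{m+1}(F)\cong H^1(F,\nu(m))$ via the Artin--Schreier-type sequence $0\to\nu(m)\to\Omega^m\xrightarrow{\wp}\Omega^m/d\Omega^{m-1}\to 0$ over the separable closure (the coherent terms have vanishing higher Galois cohomology), after which corestriction for the odd-degree separable extension $\F/F$, transitivity, the projection formula, and $\operatorname{cor}\circ\operatorname{res}=[\F:F]\cdot\mathrm{id}$ (hence the identity on the $2$-torsion group $H_2^3(F)$) are standard group-cohomological facts; one must still verify that the paper's explicit $\tr_*$ for quadratic extensions agrees with this corestriction, which is routine. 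What each approach buys: yours avoids all the valuation-theoretic lifting machinery and yields injectivity, at the cost of setting up (or carefully citing) the Kato--Milne transfer formalism, which the paper deliberately never develops; the paper's proof needs only the quadratic-extension trace it has already constructed, outsourcing the essential content to the known characteristic-$0$ result.
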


Our proof will be based on a lifting argument from characteristic $2$ to characteristic $0$, and then apply the analogue of Proposition \ref{injoddext} in characteristic not $2$. 

It is well-known that we can consider $F$ as the residue field of a Henselian discrete valuation ring $A$ of characteristic $0$ with maximal ideal $2A$. Let $E$ and $A^{\times}$ denote the field of fractions and the group of units of $A$, respectively. 

We recall two results due to Wadsworth \cite{Wadsworth} which will play an important role in our proof. To this end, we consider the set$$S=\{(-1)^ka^2+4b \mid k\in \Z, a\in A^{\times}, b\in A\}$$which is clearly a multiplicative subgroup of $A^{\times}$. 

\begin{lem}[{\cite[Lemma 1.6]{Wadsworth}}] There is a surjective homomorphism $\gamma: S\longrightarrow F/\wp(F)$ given by:$$(-1)^ka^2+ 4b\mapsto \overline{b/a^2} \mod{\wp(F)}.$$Furthermore, ${\rm Ker}\,\gamma=\pm A^{\times\, 2}.$
\label{lemlif}
\end{lem}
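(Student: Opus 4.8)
The plan is to verify directly that the assignment $\gamma\colon (-1)^k a^2 + 4b \mapsto \overline{b/a^2} \pmod{\wp(F)}$ is a well-defined homomorphism, then establish surjectivity, and finally compute the kernel. I work inside the Henselian discrete valuation ring $A$ of characteristic $0$ with residue field $F$ of characteristic $2$ and maximal ideal $2A$. I will freely use the reduction map $A \to F$, $x \mapsto \bar x$, and the fact that for a unit $a \in A^\times$ the reduction $\bar a$ is a nonzero element of $F$, so $\overline{b/a^2} = \bar b / \bar a^2$ makes sense in $F$.

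First I would check that $\gamma$ is well-defined. The subtle point is that an element $s \in S$ may admit several representations $s = (-1)^k a^2 + 4b$, and I must show $\overline{b/a^2}$ is independent of the representation modulo $\wp(F)$. Suppose $(-1)^k a^2 + 4b = (-1)^{k'} a'^2 + 4b'$. Reducing modulo $4A$, one sees $(-1)^k a^2 \equiv (-1)^{k'} a'^2$; since $a, a'$ are units and $4 \in 2A \cdot 2A$, comparing reductions modulo $2A$ forces $\bar a^2 = \bar a'^2$, hence $\bar a = \bar a'$ (the Frobenius is injective on $F$), and the sign ambiguity $(-1)^k$ disappears on reduction because $-1 \equiv 1 \pmod{2A}$. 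Then $4(b - b') = (-1)^{k'} a'^2 - (-1)^k a^2$, and dividing through by $a^2$ and reducing carefully through the valuation, I would show $\overline{b/a^2} - \overline{b'/a'^2} \in \wp(F)$; the appearance of $\wp$ here is exactly the image of the Artin--Schreier-type relation coming from $x^2 + x$, which is how the factor-of-$4$ arithmetic in characteristic $0$ degenerates to the additive Artin--Schreier operator in the residue field. This verification, together with the homomorphism property (which follows by multiplying two representatives, expanding $((-1)^k a^2 + 4b)((-1)^{k'}a'^2+4b')$, and tracking the cross terms modulo $4A$ and modulo $\wp(F)$), is the main obstacle, since it requires keeping careful control of the $2$-adic arithmetic as it passes to the residue field.

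For surjectivity, given any class $\overline{c} \in F/\wp(F)$, I would lift $c$ to some $b \in A$ with $\bar b = c$ and observe that $\gamma(1 + 4b) = \bar b = c$, taking $k = 0$, $a = 1$. Thus $\gamma$ is onto.

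Finally, for the kernel, I would argue that $s = (-1)^k a^2 + 4b \in \operatorname{Ker}\gamma$ iff $\overline{b/a^2} \in \wp(F)$, i.e.\ iff $\bar b/\bar a^2 = \bar t^2 + \bar t$ for some $t \in A$. In that case I would show $s$ can be rewritten, after absorbing $t$ into $a$, as $\pm$ a square of a unit: writing $u = a(1 + 2t)$ one computes $u^2 = a^2 + 4a^2 t + 4a^2 t^2 = a^2 + 4a^2(t^2 + t)$, and choosing $t$ so that $a^2(t^2+t)$ matches $b$ modulo the appropriate correction shows $s \in \pm A^{\times 2}$. Conversely $\gamma(\pm u^2) = \gamma(u^2)$ reduces to the $b = 0$ case, giving $\overline{0} \in \wp(F)$, so $\pm A^{\times 2} \subseteq \operatorname{Ker}\gamma$. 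Combining both inclusions yields $\operatorname{Ker}\gamma = \pm A^{\times 2}$, completing the proof.
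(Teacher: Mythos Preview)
The paper does not give its own proof of this lemma; it is quoted verbatim from \cite[Lemma 1.6]{Wadsworth} and used as a black box. So there is nothing in the paper to compare your argument against, and your outline is the natural direct verification one would expect for such a statement.

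Your well-definedness and homomorphism checks are correct (and the observation that the ``opposite sign'' case $k\not\equiv k'\pmod 2$ cannot occur, since it would force a unit into $2A$, is worth making explicit). The one place where your sketch is genuinely incomplete is the inclusion $\operatorname{Ker}\gamma\subseteq \pm A^{\times 2}$. From $\overline{b/a^2}=\bar t^{\,2}+\bar t$ you can only conclude $b\equiv a^2(t^2+t)\pmod{2A}$, so with $u=a(1+2t)$ you obtain $s=(-1)^k u^2+8c$ for some $c\in A$, not an exact equality. The phrase ``modulo the appropriate correction'' hides the remaining step: one must show $1+8cu^{-2}\in A^{\times 2}$, and this is exactly where the Henselian hypothesis on $A$ is used (apply Hensel's lemma to $X^2-(1+w)$ with $w\in 8A$, noting $|w|<|2|^2$). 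Once that is said, the argument is complete.
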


\begin{prop}[{\cite[Proposition 1.14]{Wadsworth}}] Let $V$ be a free $A$-module, $Q$ a nondegenerate quadratic form on $V$ over $A$, and $\overline{Q}$ the induced form on the $F$-vector space $V/2V$. Then, $\det Q \in S/A^{\times\,2}$ and $\Delta(\overline{Q})=\gamma(\det Q)$, where $\Delta(\overline{Q})$ denotes the Arf invariant of $\overline{Q}$.
\label{proplif}
\end{prop}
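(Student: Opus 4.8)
The plan is to reduce the statement to the case of a binary form by an orthogonal decomposition of $Q$ over $A$, and then to verify both assertions by a direct computation on each binary block. The reduction rests on three facts that make everything multiplicative or additive: the determinant of the polar form is multiplicative over orthogonal sums, the Arf invariant is additive in $F/\wp(F)$, and $\gamma$ is a homomorphism by Lemma~\ref{lemlif}. Thus once $Q\cong Q_1\perp\cdots\perp Q_n$ with each $Q_i$ binary, it suffices to check $\det Q_i\in S$ and $\gamma(\det Q_i)=\Delta(\overline{Q_i})$ block by block and then sum.

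First I would set up the decomposition. Write $b_Q(x,y)=Q(x+y)-Q(x)-Q(y)$ for the polar form; nondegeneracy of $Q$ means $b_Q$ is unimodular, i.e. induces an isomorphism $V\xrightarrow{\sim}\operatorname{Hom}_A(V,A)$. Since $b_Q(x,x)=2Q(x)\in 2A$, the reduction $\overline{b_Q}$ over $F=A/2A$ is alternating; being nondegenerate it has even rank, so $V$ has even rank and $\overline{Q}$ is a nonsingular $F$-quadratic form whose Arf invariant is defined. To split off a block, pick a unimodular $e_1\in V$; then $b_Q(e_1,-)$ is a unimodular functional, so there is $f_1\in V$ with $b_Q(e_1,f_1)=:c_1\in A^{\times}$. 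The submodule $U_1=\langle e_1,f_1\rangle$ is free of rank $2$ and the Gram matrix of $b_Q|_{U_1}$ is $\left(\begin{smallmatrix} 2Q(e_1) & c_1\\ c_1 & 2Q(f_1)\end{smallmatrix}\right)$, of determinant $4Q(e_1)Q(f_1)-c_1^2$, a unit because its reduction is $-\bar c_1^{\,2}\ne 0$. Hence $b_Q|_{U_1}$ is unimodular, $V=U_1\perp U_1^{\perp}$ with $U_1$ a free direct summand, and $Q|_{U_1^{\perp}}$ is again nondegenerate. Iterating over the rank gives $Q\cong\perp_i Q_i$ with $Q_i(x,y)=a_i x^2+c_i xy+b_i y^2$, where $a_i=Q(e_i)$, $b_i=Q(f_i)$ and $c_i\in A^{\times}$.

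Then I would carry out the binary computation. For one block, $\det Q_i=4a_ib_i-c_i^2=(-1)c_i^2+4(a_ib_i)$, which lies in $S$ (with $k$ odd and $c_i\in A^{\times}$); multiplicativity then gives $\det Q=\prod_i\det Q_i\in S$, proving the first assertion. By Lemma~\ref{lemlif}, $\gamma(\det Q_i)=\overline{a_ib_i/c_i^2}\bmod\wp(F)$. On the other side, over $F$ the substitution $y\mapsto y/\bar c_i$ shows $\overline{Q_i}$ is isometric to $\bar a_i x^2+xy+(\bar b_i/\bar c_i^{\,2})y^2$, whose Arf invariant is $\overline{a_ib_i/c_i^{2}}\bmod\wp(F)$. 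Hence $\gamma(\det Q_i)=\Delta(\overline{Q_i})$ for every $i$; summing over $i$ and using additivity of the Arf invariant together with the homomorphism property of $\gamma$ yields $\gamma(\det Q)=\sum_i\gamma(\det Q_i)=\sum_i\Delta(\overline{Q_i})=\Delta(\overline{Q})$, as required.

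The main obstacle is the orthogonal decomposition step: one must know that nondegeneracy over $A$ forces even rank and a splitting into binary blocks with unit off-diagonal entry, which is exactly what lets the middle coefficient $c_i$ be inverted mod $2A$ and makes the Arf computation legitimate. The only other point demanding care is fixing the convention for $\det Q$; I take it to be the determinant of the Gram matrix of the polar form $b_Q$ (well defined in $A^{\times}/A^{\times 2}$ by nondegeneracy, and with $A^{\times 2}\subseteq S\cap\ker\gamma$ so that $\det Q\in S/A^{\times 2}$ and $\gamma(\det Q)$ are meaningful), which is the convention under which the two sides of $\Delta(\overline{Q})=\gamma(\det Q)$ match block by block.
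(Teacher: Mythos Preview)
The paper does not supply its own proof of this proposition, citing Wadsworth instead; but the proof of Corollary~\ref{corlif} explicitly refers to Wadsworth's argument via a symplectic basis with $\det\psi'=(-1)^m+4b$, which is precisely your block decomposition with $c_i\in A^{\times}$. Your argument is correct and is essentially the same as the one being cited.
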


As a corollary, we get:

\begin{cor}
For any quadratic form $\psi$ over $F$ whose Witt class is in $I^nF\otimes W_q(F)$ for some $n\geq 1$, there exists a quadratic form $\psi'$ over $A$ whose Witt class is in $I^{n+1}A$ such that $\overline{\psi'}$ is Witt equivalent to $\psi$. Moreover, for $n=1$, we may choose $\psi'$ such that $\overline{\psi'}$ is isometric to $\psi$
\label{corlif}
\end{cor}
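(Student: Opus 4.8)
The plan is to split the two assertions. For the first (Witt equivalence, general $n$) I would reduce to lifting a single quadratic Pfister form and use that an $(n{+}1)$-fold Pfister form over $E$ automatically lies in $I^{n+1}E$; for the sharper case $n=1$ I would instead lift $\psi$ block by block so as to obtain an \emph{isometry} on reduction, and then verify the $I^2$-condition by a discriminant computation resting on \Lref{lemlif} and \Pref{proplif}. Throughout, for a form $Q$ over $A$ I write $Q_E$ for its generic fibre and $\overline Q$ for its reduction over $F$; I use that reduction commutes with orthogonal sums and with tensoring a bilinear form by a quadratic form, and that it carries a unit $u\in A^{\times}$ to $\overline u$. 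The content of ``Witt class in $I^{n+1}A$'' is that $\psi'_E\in I^{n+1}E$, which over the characteristic $0$ field $E$ is just the $(n{+}1)$st power of the fundamental ideal.

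For the first assertion, by \Pref{Generators} the Witt class of $\psi$ is a finite sum of classes of generators $\langle\langle a_1,\dots,a_n,b]]$ with $a_i\in F^{\times}$ and $b\in F$, and I lift each one separately. Choosing units $\tilde a_i\in A^{\times}$ with $\overline{\tilde a_i}=a_i$ and any $\tilde b\in A$ with $\overline{\tilde b}=b$, I set
\[
\pi'=\langle\langle \tilde a_1,\dots,\tilde a_n\rangle\rangle\otimes\bigl(X^2+XY+\tilde b\,Y^2\bigr)
\]
over $A$, the first factor being the bilinear $n$-fold Pfister form $\bigotimes_i\langle 1,-\tilde a_i\rangle$. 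Its reduction is $\langle\langle a_1,\dots,a_n\rangle\rangle\otimes[1,b]=\langle\langle a_1,\dots,a_n,b]]$, while over $E$ the binary factor completes the square to $\langle 1,\tilde b-\tfrac14\rangle\cong\langle\langle 1-4\tilde b\rangle\rangle$, so $\pi'_E\cong\langle\langle \tilde a_1,\dots,\tilde a_n,1-4\tilde b\rangle\rangle\in I^{n+1}E$. Taking $\psi'$ to be the orthogonal sum of the $\pi'$ attached to the generators in a chosen decomposition of $[\psi]$ gives $\psi'_E\in I^{n+1}E$ with $\overline{\psi'}$ Witt equivalent to $\psi$.

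For $n=1$ I must lift $\psi$ on the nose, so I assume $\psi$ nonsingular and write it as a sum of binary blocks $\psi\cong c_1[1,b_1]\perp\cdots\perp c_m[1,b_m]$, possible by the structure theory of nonsingular quadratic forms in characteristic $2$ \cite{EKM}. Lifting each block by $Q_i=\tilde c_i\bigl(X^2+XY+\tilde b_i Y^2\bigr)$ with $\tilde c_i\in A^{\times}$, $\overline{\tilde c_i}=c_i$, $\overline{\tilde b_i}=b_i$, and setting $\psi'=Q_1\perp\cdots\perp Q_m$, I get $\overline{\psi'}\cong\psi$ isometrically. It remains to show $\psi'_E\in I^2E$. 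The dimension $2m$ is even, so $\psi'_E\in IE$ and only the signed discriminant must be checked. By \Pref{proplif} the determinant $\det\psi'$ lies in $S/A^{\times2}$, and by \Lref{lemlif} $\gamma(\det\psi')=\Delta(\overline{\psi'})=\Delta(\psi)=0$, the last equality because $[\psi]\in IF\otimes W_q(F)$ has trivial Arf invariant; hence $\det\psi'\in\ker\gamma=\pm A^{\times2}$.

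The genuinely delicate point, and the one I expect to be the main obstacle, is pinning down this sign: knowing $\det\psi'\in\pm A^{\times2}$ is not yet enough, since $-1$ need not be a square in $E$. A direct computation of the polar Gram determinant gives $\det Q_i\equiv 4\tilde b_i-1\pmod{A^{\times2}}$, so over $E$ (where the two determinant normalizations agree up to squares)
\[
\det\psi'_E\equiv\prod_{i=1}^m(4\tilde b_i-1)=(-1)^m\,P\pmod{E^{\times2}},\qquad P:=\prod_{i=1}^m(1-4\tilde b_i).
\]
Here $P\in\pm A^{\times2}$ by the previous paragraph, while each factor $1-4\tilde b_i$, and hence $P$, lies in the multiplicative subgroup $1+4A$. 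I would then rule out the minus sign: if $-s^2\in 1+4A$ with $s\in A^{\times}$, reduction mod $2A$ forces $\overline s=1$, so $s\in 1+2A$, whence $s^2\in 1+4A$ and $-s^2\in -1+4A$, contradicting $(1+4A)\cap(-1+4A)=\varnothing$ (their difference meets $2\notin 4A$). Thus $(1+4A)\cap(-A^{\times2})=\varnothing$, giving $P\in A^{\times2}$. Consequently $\det\psi'_E\equiv(-1)^m\pmod{E^{\times2}}$ and the signed discriminant $(-1)^m\det\psi'_E$ is a square, so $\psi'_E\in I^2E$, completing the argument.
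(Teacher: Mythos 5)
Your proposal is correct and takes essentially the same route as the paper: for general $n$ the paper likewise lifts each quadratic Pfister generator $\langle\langle a_{i,1},\dots,a_{i,n},b_i]]$ coefficientwise to units and an element of $A$, and for $n=1$ it likewise combines \Pref{proplif} and \Lref{lemlif} to get $\det\psi'\in\pm A^{\times\,2}$ and then kills the wrong sign by an elementary computation with units congruent to $1$ modulo $2A$. The only cosmetic difference is that the paper starts from an arbitrary nondegenerate lift and normalizes $\det\psi'=(-1)^m+4b$ via Wadsworth's symplectic basis rather than your blockwise product $\prod_i(4\tilde b_i-1)$, and phrases the sign contradiction as $(u+1)^2=2(u-(-1)^m2b)\in 2A$ forcing $u\in 2A$, which is the same argument as your $(1+4A)\cap(-A^{\times\,2})=\varnothing$.
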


\begin{proof} We start with the proof in the case $n=1$. Suppose that $\psi \in IF\otimes W_q(F)$ and set $\dim \psi=2m$. Let $\psi'$ be a nondegenerate quadratic form over $A$ such that $\overline{\psi'} \cong \psi$. As in the proof of \cite[Proposition 1.14]{Wadsworth}, the form $\psi'$ has a symplectic basis for which $\det \psi'=(-1)^m+4b$ for some $b\in A$. Moreover, we have by Proposition \ref{proplif} that
\begin{equation}
\gamma(\det \psi')=\Delta(\psi).
\label{eqlif}
\end{equation}

The condition $\psi \in IF\otimes W_q(F)$ is equivalent to saying that $\Delta(\psi)=0$. It follows from (\ref{eqlif}) that $\det \psi' \in {\rm Ker }\,\gamma$. By Lemma \ref{lemlif} there exists $u\in A^{\times}$ such that $\det \psi'=\pm u^2$. We claim that we have precisely $\det \psi'=(-1)^m u^2$, which means that $\psi'\in I^2A$ because $\dim \psi'=2m$. Suppose that $\det \psi'=(-1)^{m+1} u^2$. Then, $(-1)^m+4b= -(-1)^mu^2$, and thus $(u+1)^2=2(u-(-1)^m2b)\in 2A$. Hence, $u+1\in 2A$. Let $c\in A$ be such that $u+1=2c$. Then, $2c^2=u-(-1)^m2b$ and thus $u\in 2A$, a contradiction. 

Now suppose that $n\geq 2$. Since $\psi$ belongs to $I^nF\otimes W_q(F)$, it is Witt equivalent to a sum $\sum_{i=1}^k\left<\left<a_{i,1}, \cdots, a_{i,n}, b_{i}\right]\right]$ for $a_{i,1}, \cdots, a_{i,n} \in F\setminus \{0\}$ and $b_{i}\in F$. For each $1\leq i\leq k$ and $1\leq j\leq n$, we choose $a'_{i,j} \in A^{\times}$ and $b'_{i}\in A$ such that $a_{i,j}=a'_{i,j} +2A$  and $b_i=b'_i+2A$. Since the quadratic form $[1, b'_i]$ is nondegenerate over $A$ and $a'_{i,j} \in A^{\times}$, it follows that $\psi':=\sum_{i=1}^k\left<\left<a'_{i,1}, \cdots, a'_{i,n}, b'_{i}\right]\right]$ is a nondegenerate quadratic form over $A$. Moreover, $\psi' \in I^{n+1}A$ and its reduction modulo $2$ is Witt equivalent to $\psi$. 
\end{proof}

\begin{sloppypar}

\noindent{\it Proof of Proposition \ref{injoddext}.} As before let $A$ be a Henselian discrete valuation ring of characteristic $0$ with maximal ideal $2A$ and residue field $F$. Let $E$ be the field of fractions of $A$. 

From Proposition \ref{properties} (3) we have the following equivalence:
\begin{equation}
\delta_{K/F}(B)=0\; \text{if and only if there exists}\; \lambda \in K^{\times}\; \text{such that}\; \tr_*(\lambda \phi)=0.
\label{rappel}
\end{equation}

(a) Recall that $K=F(\alpha)$ where $\alpha^2+ \alpha =a\in F\setminus \wp(F)$. Let $a'\in A$ be such that $a'+ 2A =a$. The polynomial $p(t)=t^2+t+a' \in A[t]$ is irreducible over $A$, and thus irreducible over $E$. The ring $A'=A[t]/(p(t))$ is local of maximal ideal $2A'$. Since $A'$ is integral over $A$, it is a Henselian, discrete valuation ring. The residue field of $A'$ is isomorphic to $K$, and the quotient field is isomorphic to $E':=E[t]/(p(t))$ (this argument was used in \cite[middle of page 1339]{Baeza2}). We write $A'=A[\epsilon]$ for $\epsilon$ a root of $p(t)$.

(b) The extension $\F/F$ is separable since $[\F:F]$ is odd. Hence, $\F=F(\overline{\theta})$ for a suitable $\overline{\theta}$. Let $\overline{q}(t)=t^n+\overline{a}_{n-1}t^{n-1}+\cdots + \overline{a}_1 t+ \overline{a}_0\in F[t]$ be the minimal polynomial of $\overline{\theta}$ over $F$. The polynomial $q(t)= t^n+ a_{n-1}t^{n-1}+\cdots + a_1 t+ a_0\in A[t]$ is irreducible over $E$. As in (a), $A''=A[t]/(q(t))$ is a Henselian discrete valuation ring of maximal ideal $2A''$, residue field isomorphic to $\F$ and quotient field isomorphic to $E''=E[t]/(q(t))$. We write $A''=A[\epsilon'']$ for $\epsilon''$ a root of $q(t)$.

We summarize the points (a) and (b) in the following diagram:

$$\begin{tabular}{ccccc}
$E''={\rm Frac}(A'')$ & $\longleftarrow$ &  $A''=A[\epsilon'']$ & $\longrightarrow $ & $\F$\\
$\uparrow$ &  & $\uparrow$  & &  $\uparrow$\\ $E={\rm Frac}(A)$  & $\longleftarrow $ & $A$ & $\longrightarrow $ & $F=A/2A$\\ 
$\downarrow$ &  & $\downarrow$  & &  $\downarrow$\\ $E'={\rm Frac}(A')$ & $\longleftarrow$ & $A'=A[\epsilon']$ & $\longrightarrow$ & $K=F(\alpha)$
\end{tabular}$$

(c) As in cases (a) and (b), since $\K/\F$ is quadratic, the field $\K$ is the residue field of a Henselian, discrete valuation ring isomorphic to $A''[\epsilon']$. Its quotient field is isomorphic to $E'.E''$.

(d) Let $\phi'$ be a $A'$-quadratic form of dimension $6$ whose reduction modulo $2$ is isometric to $\phi$. By Corollary \ref{corlif} (applied to the ring $A'$), $\phi'$ is an Albert form over $A'$. Since ${\rm cor}_{K/F}(B)=0$, there exists a quadratic form $\overline{\psi}\in IF \otimes W_q(F)$ such that $\phi \perp \overline{\psi}_K \sim \overline{\gamma}$ for $\overline{\gamma}\in I^2K \otimes W_q(K)$. By Corollary \ref{corlif}, there exist $\psi \in I^2A$ and $\gamma \in I^3A'$ whose reductions modulo $2$ are Witt equivalent to $\overline{\psi}$ and $\overline{\gamma}$, respectively. Since $A'$ is Henselian, it follows from \cite[Satz 3.3]{Knebusch} that $\phi' \perp \psi_{A'} \sim \gamma$. In particular
\begin{equation}
\phi' \perp \psi_{E'} \in I^3E'.
\label{eqlift}
\end{equation}

Let $B'$ be the biquaternion $E'$-algebra satisfying $M_2(B')=C(\phi')$. It follows from (\ref{eqlift}) that ${\rm cor}_{E'/E}(B')=0$.

(e) Now suppose that $\delta_{\K/\F}(B_{\K})=0$. We have to prove that $\delta_{K/F}(B)=0$. 

By (\ref{rappel}) there exists $\lambda\in \K^{\times}$ such that $\tr_*(\lambda \phi)=0$, where $\tr_*$ is the transfer map induced by the trace map with respect to the extension $\K/\F$. It follows from Proposition \ref{exaseq} in the case $n=1$ that there exists $\overline{\psi}\in I\F \otimes W_q(\F)$ such that $\lambda\phi \sim \overline{\psi}_{\K}$. Let $\psi$ be a lifting of $\overline{\psi}$ to $A''$ and $\lambda' \in A''[\epsilon']$ such that $\overline{\lambda'}=\lambda$. Since $\overline{\lambda'}\,\overline{\phi'} \sim \overline{\psi}_{\K}$ and $A''[\epsilon']$ is Henselian, it follows from \cite[Satz 3.3]{Knebusch} that $\lambda' \phi' \sim \psi$ over $A''[\epsilon']$. In particular, $\lambda' \phi' \sim \psi$ over $E'.E''$. Since $\psi$ is defined over $E''$, we get $s_*(\lambda' \phi')=0$, where $s_*$ is the Scharlau transfer with respect to the quadratic extension $E'.E''/E''$, mapping 1 to 0 and $\epsilon$ to 1. This means for the algebra $B'$ that $\delta_{E'.E''/E''}(B')=0$. Since $E''/E$ is of odd degree, it follows from \cite[Proposition 4.7]{Barry} that $\delta_{E'/E}(B')=0$.

By \cite[Lemma 4.1(3)]{Barry} there exists $\mu \in E'$ such that
\begin{equation}
s_*(\mu \phi')=0
\label{reduction}
\end{equation}
where $s_*$ is the Scharlau transfer with respect to the quadratic extension $E'/E$, mapping $\epsilon$ to $1$ and $1$ to $0$. Without loss of generality, we may suppose that $\mu \in A'$. Moreover, using the Frobenius reciprocity, we may reduce to the case where $\mu$ is a unit. Hence, $\mu\phi'$ is a regular quadratic form over $A'$.  

Let $s':A'\rightarrow A$ be the $A$-linear map satisfying $s'(1)=0$ and $s'(\epsilon')=1$. The relation (\ref{reduction}) is equivalent to saying that $(s'_*(\mu \phi'))_{E}=0$. Hence, $s'_*(\mu \phi')=0$ by \cite[Corollary 3.3]{MilnorHusemoller}. It follows from \cite[Theorem 5.2, Chapter 5]{Baeza} that $\mu \phi' \sim \phi''$ for a suitable $\phi'' \in W_q(A)$. Reducing modulo $2$, we get $\overline{\mu} \phi \sim \overline{\phi''}$, where $\overline{\phi''}$ is a quadratic form over $F$. Hence, $\tr_*(\overline{\mu} \phi)=0$, which implies by (\ref{rappel}) that $\delta_{K/F}(B)=0$.\qed

\end{sloppypar}

\section{An injectivity result} 

Let $A$ be a central simple algebra over $F$ of degree $8$ and exponent $2$ containing $K=F[\alpha]$. Let $B$ be the centralizer of $K$ in $A$. Let $t$ be an indeterminate over $F$ and $A'$ the division $F(t)$-algebra Brauer equivalent to $A\otimes_F [a, t)$. Clearly, $A'\otimes_{F(t)}K(t)$ is Brauer equivalent to $B\otimes_K K(t)$. 

Our aim in this section is to prove the following proposition that extends \cite[Proposition 4.9]{Barry} to characteristic $2$. The motivation for this proposition is the construction of central simple algebras of degree 8 and exponent 2 of nonzero $\Delta$-invariant. This invariant is currently only defined for fields of characteristic not 2 (see \cite{GaribaldiParimalaTignol}), but we believe its definition will be extended to the characteristic 2 case as well, and this proposition will come in handy when it is done. In addition, it demonstrates the use of Arason residue map, which is of independent interest.

\begin{prop} The restriction map$$H^3_2(F)/\tr_*(\dlog K^{\times} \wedge [B]) \longrightarrow H^3_2(F(t))/\dlog F(t)^{\times} \wedge [A']$$is well-defined and injective.
\label{Injection}
\end{prop}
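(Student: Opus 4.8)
The elements of both quotient groups come from single symbols: since $\dlog$ is additive and $\wedge[B]$, $\wedge[A']$ are $\Z/2$-linear, every element of $\tr_*(\dlog K^{\times}\wedge[B])$ has the form $\tr_*(\dlog\lambda\wedge[B])$ for one $\lambda\in K^{\times}$, and every element of $\dlog F(t)^{\times}\wedge[A']$ is $\dlog f\wedge[A']$ for one $f\in F(t)^{\times}$. Recall also that $B=\Ce[A]{K}$ gives $[B]=\res[K/F][A]$, and that $[A']=[A]+[a,t)$ in $H^2_2(F(t))$ with $[a,t)=\overline{a\,\dlog t}$; in particular $\dlog c\wedge[a,t)=\dlog t\wedge[a,c)$ for any $c$. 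For well-definedness I would use the projection formula together with $\tr_*(\dlog\lambda)=\dlog N_{K/F}(\lambda)$ to get $\tr_*(\dlog\lambda\wedge[B])=\tr_*(\dlog\lambda\wedge\res[K/F][A])=\dlog N_{K/F}(\lambda)\wedge[A]$. Writing $c=N_{K/F}(\lambda)\in F^{\times}$ and restricting to $F(t)$, one has $\dlog c\wedge[A']=\dlog c\wedge[A]+\dlog t\wedge[a,c)$, and since $c$ is a norm from $K$ the norm criterion for quaternion algebras in characteristic $2$ gives $[a,c)=0$. Hence $\res[F(t)/F](\tr_*(\dlog\lambda\wedge[B]))=\dlog c\wedge[A']\in\dlog F(t)^{\times}\wedge[A']$, so the induced map on quotients is well defined.

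For injectivity the plan is to prove the sharper statement $\res[F(t)/F]^{-1}\bigl(\dlog F(t)^{\times}\wedge[A']\bigr)=\tr_*(\dlog K^{\times}\wedge[B])$, which together with injectivity of $\res[F(t)/F]$ yields injectivity on the quotients. Injectivity of $\res[F(t)/F]\colon H^3_2(F)\to H^3_2(F(t))$, and the identification of its image with the classes unramified at every closed point of $\A^1_F$, I would take from the Faddeev-type residue exact sequence of Kato (the Arason residue map). So suppose $x\in H^3_2(F)$ satisfies $\res[F(t)/F](x)=\dlog f\wedge[A']$; then $\dlog f\wedge[A']$ is unramified at all finite places. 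Specializing at the rational point $t=1$, where $[a,1)=0$ so the $[a,t)$-part of $[A']$ dies, and using that the specialization of an unramified class is a section of $\res[F(t)/F]$, I obtain $x=\dlog f(1)\wedge[A]$ (for any rational point at which $f$ is a unit).

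It then remains to prove that $f(1)\in N_{K/F}(K^{\times})$ modulo $\ker(c\mapsto\dlog c\wedge[A])$: granting this, $\dlog f(1)\wedge[A]=\dlog N_{K/F}(\mu)\wedge[A]=\tr_*(\dlog\mu\wedge[B])$ for a suitable $\mu\in K^{\times}$, placing $x$ in the source subgroup. This norm statement is the main obstacle. I expect to obtain it from the full set of residue vanishings $\partial_P(\dlog f\wedge[A'])=0$ at the finite places, together with Weil reciprocity on $\mathbb P^1_F$: since $\partial_P(\dlog f\wedge[A'])=v_P(f)\,[A'_{F_P}]$ at the places where $[A']$ is unramified, these conditions force $A'$ to split at every place where $f$ has odd valuation away from $t=0,\infty$, thereby controlling the divisor of $f$ on the norm locus. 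The passage from this divisorial information to the assertion that the specialized value $f(1)$ is a norm should follow from a norm principle in the quadratic-form framework (Knebusch's norm principle, invoked through the Scharlau transfer and the exact sequence of \Pref{exaseq}), exactly paralleling the delicate step of \cite[Proposition 4.9]{Barry} in characteristic not $2$. The bookkeeping that makes everything cohere is the compatibility of the residue and specialization maps with the decomposition $[A']=[A]+[a,t)$ and the vanishing $[a,c)=0$ for norms $c$, which is what ties the computation over $F(t)$ back to the invariant over $F$.
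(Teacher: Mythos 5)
Your well-definedness argument is correct and is essentially the intended one: the projection formula gives $\tr_*(\dlog \lambda \wedge [B])=\dlog N_{K/F}(\lambda)\wedge [A]$, and $[a,N_{K/F}(\lambda))=0$ kills the $\overline{a\frac{dt}{t}}$-part of $[A']$. The injectivity half, however, has a genuine gap, and you flag it yourself: everything funnels into the claim that the specialized value of $f$ is a norm from $K$ (modulo the kernel of $c\mapsto \dlog c\wedge[A]$), and for this you only say you \emph{expect} it to follow from the residue vanishings at all closed points of $\mathbb{P}^1_F$, Weil reciprocity, and a Knebusch-type norm principle. None of this is carried out, and it is exactly the hard point of the proposition; as written it is a conjecture, not a proof. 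There is also a flaw in the reduction preceding it: specializing $\dlog f\wedge[A']$ at a rational point $t=u$ yields $\dlog f(u)\wedge\bigl([A]+[a,u)\bigr)$, and the term $[a,u)$ vanishes only when $u\in N_{K/F}(K^{\times})$ (e.g.\ $u=1$). So if $f(1)=0$ you have no substitute point: at any other unit $u$ the contamination $\dlog f(u)\wedge[a,u)$ breaks the identity $x=\dlog f(u)\wedge[A]$, and your parenthetical ``for any rational point at which $f$ is a unit'' is false.

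The paper's proof shows that the global machinery (Faddeev sequence, residues at all places, reciprocity) is unnecessary: it never leaves the $t$-adic place. Taking $f$ square-free, all terms of the equation $w_{F(t)}=\bigl(c+\overline{a\frac{dt}{t}}\bigr)\wedge\frac{df(t)}{f(t)}$ lie in $H_2^3(F(t))'=\nu_{F(t)}(2)\wedge H_2^1(\mathcal{O})$ for $\mathcal{O}$ the $t$-adic valuation ring, so the two residue maps $\xi$ and $\chi_t$ constructed from Propositions \ref{relatgenera} and \ref{Residues} and Lemma \ref{filtration} apply. If $t\nmid f$, applying $\xi$ gives $\overline{a\frac{df(0)}{f(0)}}=0$, i.e.\ $f(0)\in D_F([1,a])=N_{K/F}(K^{\times})$ --- the norm statement you could not reach drops out of the second residue at $t=0$, with no reciprocity argument --- and then $\chi_t$ gives $w=c\wedge\frac{df(0)}{f(0)}$, which lies in $\tr_*(\dlog K^{\times}\wedge[B])$ by Frobenius reciprocity. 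If $f=tg$, applying $\xi$ gives $c=\overline{a\frac{dg(0)}{g(0)}}$, so $A\sim_{Br}[a,g(0))$, hence $B$ splits and the subgroup $\tr_*(\dlog K^{\times}\wedge[B])$ is zero, while $\chi_t$ gives $w=\overline{a\frac{dg(0)}{g(0)}}\wedge\frac{dg(0)}{g(0)}=0$. Note how this also sidesteps your specialization problem: the case split on $v_t(f)\in\{0,1\}$ replaces the search for a good rational point. To repair your proposal you would have to actually prove the norm-principle step; the economical alternative is the local computation at $t=0$ just described.
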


The proof of this proposition uses the existence of the residue map from the group $H^3_2(F(t))'$ to $H_2^2(F)$, where $H^3_2(F(t))'=\nu_F(2)\wedge H_2^1({\mathcal O})$ and ${\mathcal O}$ is the ring of the $t$-adic valuation of $F(t)$. 

For the rest of this section, let ${\mathcal O}$ be a discrete valuation ring of characteristic $2$ with quotient field $L$ and residue field $\overline{L}$. Let ${\mathcal O}^{\times}$ be the group of units of ${\mathcal O}$ and $\pi$ a uniformizer. Let $W_q(L)'$ be the $W(L)$-submodule of $W_q(L)$ generated by the forms $[1, a]$ where $a\in {\mathcal O}$.

The following proposition is inspired by a result of Arason \cite[Satz 8]{Arason:1979} and gives a presentation by generators and relations of $W_q(L)'$.

\begin{prop} There exists a surjective homomorphism$$S:W(L)\otimes_{\Z/2\Z}{\mathcal O}/\wp({\mathcal O}) \longrightarrow W_q(L)'$$of $W(L)$-modules given by: $\left< \alpha \right> \otimes [d]\mapsto \alpha [1,d]_L$, and whose kernel is the $W(L)$-submodule generated by the elements $\langle 1, q\rangle\otimes [d]$ such that $q\in L^{\times}$, $d\in {\mathcal O}$ and $q\in D_L([1,d])$.
\label{relatgenera}
\end{prop}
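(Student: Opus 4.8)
The plan is to realise $S$ as a well-defined surjective homomorphism of $W(L)$-modules, to dispose quickly of the inclusion $N\subseteq\ker S$, and then to prove the reverse inclusion $\ker S\subseteq N$ by exhibiting an explicit left inverse of the induced map. Note first that the tensor product makes sense: $\mathcal{O}/\wp(\mathcal{O})$ is an $\mathbb{F}_2$-vector space because $\operatorname{char}(\mathcal{O})=2$, and $W(L)$ is $2$-torsion since $\langle 1,1\rangle$ is metabolic (so $2\langle\alpha\rangle=\langle\alpha\rangle\langle 1,1\rangle=0$). To build $S$ I consider the pairing $(w,[d])\mapsto w\cdot[1,d]_L$ on $W(L)\times\mathcal{O}/\wp(\mathcal{O})$. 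It is additive in $w$ by the module structure of $W_q(L)$; it is independent of the representative $d$ because $\wp(\mathcal{O})\subseteq\wp(L)$ and $[1,d]_L\cong[1,d']_L$ whenever $d\equiv d'\pmod{\wp(L)}$; and it is additive in $[d]$ because $[1,d_1]\perp[1,d_2]$ is always isotropic (both summands represent $1$), so splitting off a hyperbolic plane gives $[1,d_1]\perp[1,d_2]\cong[1,0]\perp[1,d_1+d_2]$, i.e. $[1,d_1]+[1,d_2]=[1,d_1+d_2]$ in $W_q(L)$. Hence the pairing is $\mathbb{Z}/2\mathbb{Z}$-bilinear and induces the $W(L)$-linear map $S$, which is surjective since its image contains the generators $\langle q\rangle\cdot[1,d]_L$ of $W_q(L)'$.

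For $N\subseteq\ker S$ it is enough to treat the generators: if $q\in D_L([1,d])$ then $S(\langle 1,q\rangle\otimes[d])=\langle\langle q,d]]_L$ is a $2$-fold quadratic Pfister form which is isotropic, hence hyperbolic, so it vanishes in $W_q(L)$. As $\ker S$ is a submodule it contains $N$, and $S$ factors through $\bar{S}\colon R/N\to W_q(L)'$, where $R:=W(L)\otimes_{\mathbb{Z}/2\mathbb{Z}}\mathcal{O}/\wp(\mathcal{O})$.

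To get $\ker S\subseteq N$ I would construct an additive map $\Psi\colon W_q(L)'\to R/N$ with $\Psi\circ\bar{S}=\operatorname{id}$; this makes $\bar{S}$ injective, hence an isomorphism, forcing $\ker S=N$. Every class in $W_q(L)'$ can be written as $\perp_i\alpha_i[1,d_i]$ with $\alpha_i\in L^\times$ and $d_i\in\mathcal{O}$ (diagonalise the bilinear coefficients, using that in characteristic $2$ each Witt class of $W(L)$ is a sum of one-dimensional forms), and I set $\Psi(\perp_i\alpha_i[1,d_i]):=\sum_i\langle\alpha_i\rangle\otimes[d_i]\bmod N$. On the module generators $\Psi\circ\bar{S}$ is then the identity, so the whole argument reduces to showing that $\Psi$ is well defined. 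Since the group is $2$-torsion and a hyperbolic plane is $[1,0]$ with $[0]=0$ in $\mathcal{O}/\wp(\mathcal{O})$, this is equivalent to the following Key Lemma: if $\perp_i\alpha_i[1,d_i]$ with all $d_i\in\mathcal{O}$ is hyperbolic, then $\sum_i\langle\alpha_i\rangle\otimes[d_i]\in N$.

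I would prove the Key Lemma by induction on the number of binary blocks, using a Witt-style chain-equivalence argument for nonsingular quadratic forms to reduce an arbitrary relation to elementary relations involving at most two blocks. A binary similarity $\alpha[1,d]\cong\beta[1,d]$ forces $\alpha\beta\in D_L([1,d])$, because $[1,d]$ is the round $1$-fold quadratic Pfister form; then $(\langle\alpha\rangle+\langle\beta\rangle)\otimes[d]=\langle\alpha\rangle\cdot(\langle 1,\alpha\beta\rangle\otimes[d])$ lies in $N$, so such moves are invisible in $R/N$. The substantial case is the four-dimensional relation $\alpha_1[1,d_1]\perp\alpha_2[1,d_2]\cong\beta_1[1,e_1]\perp\beta_2[1,e_2]$, and here the valuation is decisive: since $v(\ell^2+\ell)<0$ whenever $v(\ell)<0$, one has $\wp(\ell)\in\mathcal{O}\Rightarrow\ell\in\mathcal{O}$, hence $\wp(L)\cap\mathcal{O}=\wp(\mathcal{O})$. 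Comparing Arf invariants (which equal $d$ for $\alpha[1,d]$, independently of $\alpha$) therefore yields $[d_1]+[d_2]=[e_1]+[e_2]$ already in $\mathcal{O}/\wp(\mathcal{O})$, so the quadratic slots agree in $R$ and only the bilinear factors remain to be reconciled, which returns us to binary similarities lying in $N$. I expect the main obstacle to be exactly this chain-equivalence bookkeeping: keeping every intermediate binary block of the integral shape $\gamma[1,c]$ with $c\in\mathcal{O}$ (again guaranteed by $\wp^{-1}(\mathcal{O})=\mathcal{O}$) so that each elementary move is accounted for by a generator $\langle 1,q\rangle\otimes[d]$ of $N$. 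Granting the Key Lemma, $\Psi$ is well defined with $\Psi\circ\bar{S}=\operatorname{id}$, so $\bar{S}$ is an isomorphism and $\ker S=N$.
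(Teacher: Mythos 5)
Your first half is sound and matches the paper: the bilinearity check (using $[1,d_1]+[1,d_2]=[1,d_1+d_2]$ in $W_q(L)$ and $2$-torsion), surjectivity, and $\mathcal{N}\subseteq\ker S$ are all as in the paper's proof. Note, though, that your left-inverse packaging buys nothing: the well-definedness of $\Psi$ is \emph{verbatim} your Key Lemma, which in turn is verbatim the inclusion $\ker S\subseteq\mathcal{N}$ (every element of $W(L)\otimes_{\mathbb{Z}/2\mathbb{Z}}\mathcal{O}/\wp(\mathcal{O})$ is a sum $\sum\langle\alpha_i\rangle\otimes[d_i]$), so the entire content of the proposition is deferred to the Key Lemma, and that is where your argument breaks down. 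Two concrete problems. First, the chain-equivalence step is unsupported: you invoke a Witt-style chain equivalence for decompositions of nonsingular quadratic forms into binary blocks in characteristic $2$, with the extra requirement that every intermediate block have the integral shape $\gamma[1,c]$, $c\in\mathcal{O}$. No such theorem is cited, and your justification via $\wp^{-1}(\mathcal{O})=\mathcal{O}$ proves something much weaker, namely that the natural map $\mathcal{O}/\wp(\mathcal{O})\to L/\wp(L)$ is injective; it does not force the Arf slot of an intermediate block produced by an abstract isometry to lie in $\mathcal{O}+\wp(\mathcal{O})$. Second, your treatment of the ``substantial'' four-dimensional case is wrong: from $\alpha_1[1,d_1]\perp\alpha_2[1,d_2]\cong\beta_1[1,e_1]\perp\beta_2[1,e_2]$, comparing Arf invariants only yields $[d_1]+[d_2]=[e_1]+[e_2]$ in $\mathcal{O}/\wp(\mathcal{O})$. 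This constrains a \emph{sum} of classes and says nothing about the individual tensors $\langle\alpha_i\rangle\otimes[d_i]$ modulo $\mathcal{N}$; the assertion that ``the quadratic slots agree in $R$'' so that ``only the bilinear factors remain to be reconciled'' has no meaning in the tensor product, and a four-dimensional isometry between sums of two binary blocks does not in general restrict to binary similarities. This case is precisely the heart of the proposition, and it is left unproved.

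For comparison, the paper (following Arason's Satz 8) avoids chain equivalence entirely and runs an induction on the number $n$ of summands in $A=\sum_{i=1}^n\langle\alpha_i\rangle\otimes[d_i]\in\ker S$: hyperbolicity of $Q=\alpha_1[1,d_1]\perp\cdots\perp\alpha_n[1,d_n]$ gives isotropy, and clearing denominators produces $x_i\in D_{\mathcal{O}}(\alpha_i[1,d_i])$ with $\sum_{i}x_i=0$. If some $x_i$ is a unit, the standard isometries $[a,b]\perp[c,d]\cong[a+c,b]\perp[c,b+d]$ and $x[a,b]\cong[xa,x^{-1}b]$ split off a hyperbolic plane and produce a relation with $n-1$ terms, to which the induction hypothesis applies after rewriting $A$ modulo $\mathcal{N}$ (using moves like $\langle\alpha_i\rangle\otimes[d_i]\equiv\langle x_i\rangle\otimes[d_i]$, which are exactly your legitimate ``binary similarity'' moves); if instead all $x_i\in\pi\mathcal{O}$, one writes $x_i=\pi^{\epsilon_i}y_i$ and factors out the minimal power of $\pi$ to reduce to the unit case. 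If you want to salvage your write-up, replace the chain-equivalence Key Lemma by this representation-theoretic induction: the valuation enters not through $\wp^{-1}(\mathcal{O})=\mathcal{O}$ alone (which the paper uses only for the base case $n=1$) but through the unit/non-unit dichotomy on the representing elements $x_i$.
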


\begin{proof} First of all the map $S$ is well-defined since $[1,a]_L$ is hyperbolic for $a\in \wp({\mathcal O})$, and the forms $[1, a_1+a_2]_L$ and $[1,a_1]_L\perp [1,a_2]_L$ are Witt equivalent for any $a_1, a_2\in {\mathcal O}$. 

Let ${\mathcal N}$ be the $W(L)$-submodule of $W(L)\otimes_{\Z/2\Z}{\mathcal O}/\wp({\mathcal O})$ generated by the elements $\langle 1,q\rangle \otimes[d]$ such that $d\in {\mathcal O}$ and $q\in D_L([1,d])$. 

Since the form $\langle 1,q\rangle \otimes[1,d]_L$ is hyperbolic when $q\in D_L([1,d])$, it follows that ${\mathcal N}\subset {\rm Ker}\,S$. It remains to prove the opposite inclusion ${\rm Ker}\,S \subset {\mathcal N}$. 

Let $A=\sum_{i=1}^n\left<\alpha_i\right>\otimes [d_i]\in {\rm Ker}\,S$ for $\alpha_i\in L^{\times}$ and $d_i\in {\mathcal O}$. We proceed by induction on $n$ to prove that $A\in {\mathcal N}$. 

If $n=1$, then $A\in {\rm Ker}\,S$ implies that $\alpha_1[1, d_1]_L$ is hyperbolic. Then, it is easy to prove that $d_1\in \wp({\mathcal O})$, and thus $A=0$.

Suppose that $n>1$ and any element of ${\rm Ker}\,S$ which is a sum of less that $n$ elements $\left<\alpha\right>\otimes [d]$, with $\alpha \in L^{\times}$ and $d\in {\mathcal O}$, belongs to ${\mathcal N}$. 

Without loss of generality, we may suppose that $\alpha_1, \cdots, \alpha_n \in {\mathcal O}\setminus \{0\}$. Since $A\in {\rm Ker}\,S$, the quadratic form $Q:=\alpha_1[1, d_1]\perp \cdots \perp \alpha_n[1, d_n]$ is hyperbolic over $L$, in particular it is isotropic. 

Since $L$ is the field of fractions of ${\mathcal O}$, there exists $x_i\in D_{\mathcal O}(\alpha_i[1,d_i])$ for any $1\leq i\leq n$ such that 
$\sum_{i=1}^n x_i=0$. Without loss of generality, we may suppose that $x_i\neq 0$ for any $1\leq i\leq n$.\vskip1.5mm

\noindent{\it (1) Suppose that at least one of $x_1, \cdots, x_n$ is a unit.} We may suppose that $x_1$ is a unit. Using the standard isometries
\begin{eqnarray*}
[a,b]\perp [c,d]\cong [a+c, b]\perp [c, b+d] & \text{and}& x[a,b] \cong [xa, x^{-1}b]
\end{eqnarray*}
for any $a, b, c, d\in L$ and $x\in L\setminus\{0\}$, we deduce

\begin{eqnarray}\nonumber Q_L & \cong & [x_1, x_1^{-1}d_1] \perp [x_2, x_2^{-1}d_2]\perp \cdots \perp [x_n, x_n^{-1}d_n]\\\label{eqiso1} & \cong & [0, x_1^{-1}d_1] \perp [x_2, x_2^{-1}d_2+x_1^{-1}d_1]\perp \cdots \perp [x_n, x_n^{-1}d_n+ x_1^{-1}d_1]\\\nonumber & \cong & [0,0] \perp 
x_2[1, d_2+ x_2x_1^{-1}d_1]\perp \cdots \perp x_n[1, d_n+ x_n x_1^{-1}d_1].
\end{eqnarray}
Since $Q_L$ is hyperbolic, it follows from (\ref{eqiso1}) that the quadratic form
\[x_2[1, d_2+ x_2x_1^{-1}d_1]\perp \cdots \perp x_n[1, d_n+ x_n x_1^{-1}d_1]\] is hyperbolic over $L$, which means that $$B:=\sum_{i=2}^n\left<x_i\right>\otimes [d_i+ x_i x_1^{-1}d_1]\in {\rm Ker}\,S.$$

We deduce by the induction hypothesis that $B\in {\mathcal N}$. Moreover, the hyperbolicity of $\alpha_i[1, d_i]_L\perp x_i[1, d_i]_L$ implies that
\begin{equation}
\left<\alpha_i\right>\otimes [d_i] \equiv \left<x_i\right>\otimes [d_i] \mod{\mathcal N}.
\label{revequa1}
\end{equation}

We also have
\begin{eqnarray}\label{revequa2}
\left<x_i\right>\otimes [x_ix_1^{-1}d_1] & \equiv & \left<x_i(x_ix_1^{-1}d_1)\right>\otimes [x_ix_1^{-1}d_1] \mod{\mathcal N}\\\nonumber
& = & \left<x_1^{-1}d_1\right>\otimes [x_ix_1^{-1}d_1].
\end{eqnarray}

Now combining (\ref{revequa1}) and (\ref{revequa2}) with the fact that $B=\sum_{i=2}^n\left<x_i\right>\otimes [d_i]+ \sum_{i=2}^n \left<x_i\right>\otimes [x_i x_1^{-1}d_1]\in {\mathcal N}$, we deduce modulo ${\mathcal N}$ that
\begin{eqnarray*}
A & \equiv & \left<\alpha_1\right>\otimes[d_1]+ \sum_{i=2}^n \left<x_1^{-1}d_1\right>\otimes [x_ix_1^{-1}d_1]\\ & \equiv & 
\left<x_1\right>\otimes[d_1]+ \sum_{i=2}^n \left<x_1^{-1}d_1\right>\otimes [x_ix_1^{-1}d_1]\\ & \equiv & \left<x_1d_1\right>\otimes[d_1]+ \sum_{i=2}^n \left<x_1^{-1}d_1\right>\otimes [x_ix_1^{-1}d_1]\\& = & \left<x_1^{-1}d_1\right>\otimes[d_1]+ \sum_{i=2}^n \left<x_1^{-1}d_1\right>\otimes [x_ix_1^{-1}d_1]\\& =& \left<x_1^{-1}d_1\right>\otimes\left[\sum_{i=1}^n x_ix_1^{-1}d_1\right].
\end{eqnarray*}

Since ${\mathcal N} \subset {\rm Ker}\,S$ and $A\in {\rm Ker}\,S$, it follows that $C:= \left<x_1^{-1}d_1\right>\otimes\left[\sum_{i=1}^n x_ix_1^{-1}d_1\right]\in {\rm Ker}\,S$. By the case $n=1$, we conclude that $C\in {\mathcal N}$. Consequently, $A\in {\mathcal N}$.

{\it (2) Suppose that $x_1, \cdots, x_n\in \pi{\mathcal O}$.} Hence, $x_i=\pi^{\epsilon_i}y_i$ for $y_i$ a unit and $\epsilon_i>0$. Without loss of generality, we may suppose $\epsilon_1\leq \epsilon_i$ for any $2\leq i\leq n$. Since we have \[\left<\alpha_i\right>\otimes [d_i]\equiv \left<\pi^{\epsilon_i} y_i\right>\otimes [d_i]\, \mod{\mathcal N}\] for any $1\leq i\leq n$, it follows that
\begin{equation}
A\equiv \sum_{i=1}^n\left<\pi^{\epsilon_i} y_i\right>\otimes [d_i] \mod{\mathcal N}.
\label{revequa3}
\end{equation}
Obviously, the hyperbolicity of $Q_L$ implies that the form\[y_1[1, d_1]\perp \pi^{\epsilon_2-\epsilon_1}y_2[1, d_2]\cdots \perp \pi^{\epsilon_n-\epsilon_1}y_n[1, d_n]\] is also hyperbolic over $L$, which means that $A':=\sum_{i=1}^n\left< \pi^{\epsilon_i-\epsilon_1}y_i\right>\otimes [d_i]\in {\rm Ker}\,S$. Since $y_1+\sum_{i=2}^n\pi^{\epsilon_i-\epsilon_1}y_i=0$ and $y_1$ is a unit, it follows from the case (1) that $A'\in {\mathcal N}$. Hence, $\left<\pi^{\epsilon_1}\right>. A'\in {\mathcal N}$ and we conclude by (\ref{revequa3}) that $A\in {\mathcal N}$.

\end{proof}

For any $a\in L^{\times}$, we have $\langle a\rangle \cong \langle \pi^iu\rangle$ where $i=0$ or $1$ and $u\in {\mathcal O}^{\times}$. There exist group homomorphisms $\partial_{\pi}^1, \partial_{\pi}^2:W(L)\rightarrow W(\overline{L})$ defined on generators as follows:
\[\partial_{\pi}^k(\langle \pi^iu\rangle)=\begin{cases} \langle \overline{u} \rangle & \text{if}\; k\not\equiv i \pmod{2}\\ 0 & \text{if}\; k\equiv i \pmod{2}
\end{cases}\]
(see \cite[Lemma (1.1), page 85]{MilnorHusemoller}). We call $\partial_{\pi}^1$ and $\partial_{\pi}^2$ the first and the second residue homomorphisms associated with the valuation of ${\mathcal O}$. Note that $\partial_{\pi}^2$ depends on the choice of the uniformizer $\pi$.

\begin{prop} We keep the notations and hypotheses as in Proposition \ref{relatgenera}.  There exist two group homomorphisms $\delta_{\pi}, \Delta_{\pi}:W_q(L)'\rightarrow W_q(\overline{L})$ given as follows:$$\delta_{\pi}(B\otimes [1,d])=\partial_{\pi}^2(B) \otimes [1,\overline{d}]$$and$$\Delta_{\pi}(B\otimes [1,d])=(\partial_{\pi}^1+\partial_{\pi}^2)(B) \otimes [1,\overline{d}],$$where for $d\in {\mathcal O}$, we denote by $\overline{d}$ its residue class in $\overline{L}$.
\label{Residues}
\end{prop}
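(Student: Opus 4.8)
The plan is to build $\delta_\pi$ and $\Delta_\pi$ from the presentation of $W_q(L)'$ furnished by Proposition \ref{relatgenera}. Since $S$ is surjective with the kernel described there, to construct $\delta_\pi$ it suffices to produce a group homomorphism $\widetilde{\delta}_\pi\colon W(L)\otimes_{\Z/2\Z}{\mathcal O}/\wp({\mathcal O})\to W_q(\overline{L})$ that annihilates $\ker S$; such a map factors uniquely through $S$ and yields $\delta_\pi$ with the stated formula. I would define $\widetilde{\delta}_\pi$ on pure tensors by $\langle\alpha\rangle\otimes[d]\mapsto\partial_\pi^2(\langle\alpha\rangle)\otimes[1,\overline{d}]$. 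That this descends to the tensor product over $\Z/2\Z$ is routine: $\partial_\pi^2$ is additive, the assignment $d\mapsto[1,\overline{d}]$ is additive modulo hyperbolic forms and kills $\wp({\mathcal O})$ (because $[1,\wp(c)]$ is hyperbolic), and both $W(L)$ and ${\mathcal O}/\wp({\mathcal O})$ are $2$-torsion, so the rule is $\Z/2\Z$-bilinear into the $W(\overline{L})$-module $W_q(\overline{L})$.

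The substance is the vanishing $\widetilde{\delta}_\pi(\ker S)=0$. By Proposition \ref{relatgenera} the kernel is the $W(L)$-submodule generated by the elements $\langle 1,q\rangle\otimes[d]$ with $d\in{\mathcal O}$ and $q\in D_L([1,d])$, so a typical element is a sum of terms $(C\langle 1,q\rangle)\otimes[d]$ with $C\in W(L)$. Hence it is enough to show, for every such $C,d,q$, that $\partial_\pi^2(C\langle 1,q\rangle)\otimes[1,\overline{d}]=0$. Writing $q=\pi^{i}u'$ with $i\in\{0,1\}$ and $u'\in{\mathcal O}^\times$ (so that $\langle 1,q\rangle=\langle 1,\pi^{i}u'\rangle$ depends only on the valuation parity of $q$) and using the elementary residue identity $\partial_\pi^2(\langle v\rangle C)=\langle\overline{v}\rangle\,\partial_\pi^2(C)$ for units $v$, one finds in the case $i=0$ that $\partial_\pi^2(C\langle 1,u'\rangle)=\langle 1,\overline{u'}\rangle\,\partial_\pi^2(C)$, so the term equals $\partial_\pi^2(C)\otimes\bigl(\langle 1,\overline{u'}\rangle\otimes[1,\overline{d}]\bigr)$.

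The crux, and the step I expect to be the main obstacle, is to control the image in the residue field of the values represented by $[1,d]$. The key input is a reduction lemma: given $q\in D_L([1,d])$, choose $x,y\in L$ with $q=x^2+xy+dy^2$, rescale by $\pi^{-m}$ where $m=\min(v(x),v(y))$ to obtain a primitive integral pair $(x_0,y_0)$, and reduce modulo $\pi$. Tracking $v(\pi^{-2m}q)=v(q)-2m$ then gives two cases. If $v(q)$ is odd, the primitively represented value $\pi^{-2m}q$ has positive valuation, so $(\overline{x_0},\overline{y_0})\neq 0$ witnesses that $[1,\overline{d}]$ is isotropic, hence hyperbolic. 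If $v(q)$ is even, then either $[1,\overline{d}]$ is hyperbolic, or the reduction shows $\overline{u'}\in D_{\overline{L}}([1,\overline{d}])$. In the first situation ($i=1$) the factor $[1,\overline{d}]$ already vanishes in $W_q(\overline{L})$, so the whole term is zero without any further computation; in the second situation ($i=0$) the binary forms $[1,\overline{d}]$ and $\overline{u'}[1,\overline{d}]$ share their Arf invariant and represent $\overline{u'}$, hence are isometric, so $\langle 1,\overline{u'}\rangle\otimes[1,\overline{d}]$ is hyperbolic and the term is again zero. This establishes $\widetilde{\delta}_\pi(\ker S)=0$.

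Therefore $\widetilde{\delta}_\pi$ factors through $S$ and defines $\delta_\pi\colon W_q(L)'\to W_q(\overline{L})$ with $\delta_\pi(B\otimes[1,d])=\partial_\pi^2(B)\otimes[1,\overline{d}]$, as required. The homomorphism $\Delta_\pi$ is obtained by the identical argument with $\partial_\pi^1+\partial_\pi^2$ in place of $\partial_\pi^2$: the residue identity $(\partial_\pi^1+\partial_\pi^2)(\langle v\rangle C)=\langle\overline{v}\rangle\,(\partial_\pi^1+\partial_\pi^2)(C)$ for units $v$ and the same reduction lemma are used verbatim, the only change being that $\langle 1,\overline{u'}\rangle$ now multiplies $(\partial_\pi^1+\partial_\pi^2)(C)$, which leaves the hyperbolicity conclusions untouched.
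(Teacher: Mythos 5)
Your proof is correct and follows essentially the same route as the paper's: define the map on $W(L)\otimes_{\Z/2\Z}{\mathcal O}/\wp({\mathcal O})$, show it annihilates the kernel generators $\langle 1,q\rangle\otimes[d]$ of $S$ via a case split on the parity of $v(q)$, and factor through the presentation of Proposition \ref{relatgenera}. The only differences are cosmetic: you spell out, via the primitive-pair reduction argument, the two facts the paper asserts without proof (that $q\in D_L([1,d])$ of odd valuation forces $[1,\overline{d}]$ to be hyperbolic, and of even valuation yields $\overline{u'}\in D_{\overline{L}}([1,\overline{d}])$, whence $\langle 1,\overline{u'}\rangle\otimes[1,\overline{d}]=0$ by roundness), and you treat a general coefficient $C\in W(L)$ uniformly through the identity $\partial_\pi^2(\langle v\rangle C)=\langle\overline{v}\rangle\partial_\pi^2(C)$ instead of the paper's separate cases $a\in{\mathcal O}^{\times}$ and $a=\pi b$.
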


\begin{proof} For any $d_1, d_2\in {\mathcal O}$, if $[1,d_1]_L\cong [1,d_2]_L$, then $[1,\overline{d_1}]\cong [1,\overline{d_2}]$. Hence, we have bi-additive maps $W(L)\times {\mathcal O}/\wp({\mathcal O}) \rightarrow W_q(\overline{L})$ given by:$$(B, [d])\mapsto \partial_{\pi}^2(B) \otimes [1, \overline{d}]$$ $$(B, [d])\mapsto (\partial_{\pi}^1+\partial_{\pi}^2)(B) \otimes [1, \overline{d}].$$

This induces group homomorphisms $\lambda, \gamma: W(L)\otimes_{\Z/2\Z} {\mathcal O}/\wp({\mathcal O}) \rightarrow W_q(\overline{L})$ given by:$$\lambda(B\otimes [d])=\partial_{\pi}^2(B) \otimes [1, \overline{d}]$$ $$\gamma(B\otimes [d])=(\partial_{\pi}^1+\partial_{\pi}^2)(B) \otimes [1, \overline{d}].$$

Moreover, let $d\in {\mathcal O}$ and $q\in L^{\times}$ be such that $q\in D_L([1,d])$, and let $a \in L^{\times}$. 

(1) If $q=u\pi$ for $u\in {\mathcal O}^{\times}$, then the condition $q\in D_L([1,d])$ implies $[1, \overline{d}]=0$. Hence, $\lambda(a\langle 1, q\rangle \otimes [d])=0$ and $\gamma(a\langle 1, q\rangle \otimes [d])=0$.

(2) If $q$ is a unit, then $\overline{q}\in D_{\overline{L}}([1, \overline{d}])$ and thus $\langle 1, \overline{q}\rangle \otimes [1, \overline{d}]=0$. 

-- If $a$ is a unit, then $\lambda(a\langle 1,q\rangle \otimes [d])=0$ since $\partial_{\pi}^2(a\langle 1,q\rangle)=0$, and $\gamma(a\langle 1, q\rangle \otimes [d])=\overline{a}\langle 1, \overline{q}\rangle \otimes [1, \overline{d}]=0$.

-- If $a=\pi b$ for $b\in {\mathcal O}^{\times}$. Then,$$\lambda(a\langle 1, q\rangle \otimes [d])=\gamma(a\langle 1, q\rangle \otimes [d])=\overline{b}\langle 1, \overline{q}\rangle \otimes [1, \overline{d}]=0.$$

This proves that the maps $\lambda$ and $\gamma$ vanish on the kernel of$$S:W(L)\otimes_{\Z/2\Z}{\mathcal O}/\wp({\mathcal O}) \longrightarrow W_q(L)',$$and thus this induces, by Proposition \ref{relatgenera}, group homomorphisms $\delta_{\pi}:W_q(L)' \rightarrow W_q(\overline{L})$ and $\Delta_{\pi}:W_q(L)' \rightarrow W_q(\overline{L})$ given by: $\delta_{\pi}(B\otimes [1,d])=\partial_{\pi}^2(B) \otimes [1, \overline{d}]$ and $\Delta_{\pi}(B\otimes [1,d])=(\partial_{\pi}^1+\partial_{\pi}^2)(B) \otimes [1, \overline{d}]$.
\end{proof}

For any integer $n\geq 1$, let $P_n(L)'$ denote the set of $n$-fold quadratic Pfister forms $\langle\langle a_1, \cdots, a_{n-1},b]]$ with $a_1, \dots, a_{n-1}\in L^{\times}$ and $b\in {\mathcal O}$. Let $GP_n(L)'$ be the set $L\cdot P_n(L)'$. We take $I^n_q(L)'=I^{n-1}L \otimes \{[1,a]\mid a\in {\mathcal O}\}$ and $\overline{I}^n_q(L)'=I^n_q(L)'/I^{n+1}_q(L)'$. Clearly, $I^n_q(L)'$ is additively generated by $GP_n(L)'$. 

\begin{lem} We keep the same notations and hypotheses as in Proposition \ref{Residues}. Let $n\geq 1$ be an integer. Then:\\(1) 
$\delta_{\pi}(P_n(L)')=GP_n(\overline{L}) \cup GP_{n-1}(\overline{L})$ and $\delta_{\pi}(I^n_q(L)')=I^{n-1}_q(\overline{L})$.\\(2) $\Delta_{\pi}(P_n(L)')=GP_n(\overline{L})$ and $\Delta_{\pi}(I^n_q(L)')=I^{n}_q(\overline{L})$.\\(3) There exist well-defined group homomorphisms$$\delta: \overline{I}^{n}_q(L)' \rightarrow \overline{I}^{n-1}_q(\overline{L})$$and$$\Delta_{\pi}: \overline{I}^{n}_q(L)' \rightarrow \overline{I}^{n}_q(\overline{L}),$$ given by: $\delta(\phi + I^{n+1}_q(L)')=\delta_{\pi}(\phi)+ I^{n}_q(\overline{L})$ and $\Delta_{\pi}(\phi + I^{n+1}_q(L)')= \Delta_{\pi}(\phi)+ I^{n}_q(\overline{L})$. Moreover, the map $\delta$ is independent of the choice of the uniformizer $\pi$.
\label{filtration}
\end{lem}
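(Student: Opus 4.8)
The plan is to reduce all three parts to an explicit computation of the residue homomorphisms $\partial_\pi^1,\partial_\pi^2$ on (general) bilinear Pfister forms, and then to transport these through the defining rule $B\otimes[1,d]\mapsto\partial(B)\otimes[1,\overline d]$ of $\delta_\pi$ and $\Delta_\pi$ from Proposition \ref{Residues}. For parts (1) and (2) I would work on the generators $\langle c\rangle\langle\langle a_1,\dots,a_{n-1}\rangle\rangle\otimes[1,b]$ of $GP_n(L)'$, where $c,a_i\in L^\times$ and $b\in{\mathcal O}$. The crucial normalization is that every bilinear Pfister form is isometric to one with at most one slot of odd value: since $-1=1$ in characteristic $2$ we have $\langle\langle \pi u,\pi v\rangle\rangle\cong\langle\langle \pi u,uv\rangle\rangle$, so pairs of odd-value slots collapse. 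Writing $c=\pi^iu$ with $i\in\{0,1\}$ and $u\in{\mathcal O}^\times$, and using the standard rules $\partial_\pi^2(\langle u\rangle\xi)=\langle\overline u\rangle\partial_\pi^2(\xi)$, $\partial_\pi^1(\langle\pi\rangle\xi)=\partial_\pi^2(\xi)$ and $\partial_\pi^2(\langle\pi\rangle\xi)=\partial_\pi^1(\xi)$, everything comes down to two cases for $B=\langle\langle a_1,\dots,a_{n-1}\rangle\rangle$: either all $a_i$ are units, so $\partial_\pi^2(B)=0$ and $\partial_\pi^1(B)=\langle\langle\overline a_1,\dots,\overline a_{n-1}\rangle\rangle$, or exactly one slot is $\pi\times$(unit), so both residues are scalar multiples of an $(n-2)$-fold bilinear Pfister form whose sum is an $(n-1)$-fold one. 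Tracking the scalar $\langle c\rangle$ then shows that $\Delta_\pi$ always outputs a general $n$-fold quadratic Pfister form, giving $\Delta_\pi(GP_n(L)')=GP_n(\overline L)$, whereas $\delta_\pi$ outputs a general $(n-1)$-fold form, or --- precisely when the odd value sits on $\langle c\rangle$ against an all-unit $B$ --- a general $n$-fold form, giving $\delta_\pi(GP_n(L)')=GP_n(\overline L)\cup GP_{n-1}(\overline L)$. Surjectivity onto these sets is obtained by lifting: any prescribed Pfister form over $\overline L$ is realised by lifting its slot entries to units, or to $\pi\times$(unit), over $L$.

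The statements about $I^n_q(L)'$ then follow formally. Since $I^n_q(L)'$ is additively generated by $GP_n(L)'$ and $\delta_\pi,\Delta_\pi$ are group homomorphisms, their images on $I^n_q(L)'$ are the additive groups generated by the images of the generators. Because $GP_n(\overline L)\subseteq I^n_q(\overline L)\subseteq I^{n-1}_q(\overline L)$, the occasional $n$-fold contribution is absorbed, and one gets $\delta_\pi(I^n_q(L)')\subseteq I^{n-1}_q(\overline L)$ and $\Delta_\pi(I^n_q(L)')\subseteq I^{n}_q(\overline L)$; the reverse inclusions hold because $GP_{n-1}(\overline L)$ (resp. $GP_n(\overline L)$) lies in the image and additively generates the target.

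For part (3), well-definedness of the induced maps is immediate from (1) and (2) read one level up: $\delta_\pi(I^{n+1}_q(L)')=I^n_q(\overline L)$ shows $\delta_\pi$ carries $I^{n+1}_q(L)'$ into the subgroup $I^n_q(\overline L)$ that becomes trivial in $\overline{I}^{n-1}_q(\overline L)$, and similarly $\Delta_\pi(I^{n+1}_q(L)')=I^{n+1}_q(\overline L)$ handles $\Delta_\pi$. For the independence of $\delta$ from $\pi$, I would replace $\pi$ by $\pi'=u\pi$ with $u\in{\mathcal O}^\times$; then $\partial_\pi^1$ is unchanged while $\partial_{\pi'}^2=\langle\overline u\rangle\partial_\pi^2$ (using $\langle c\rangle\cong\langle c^{-1}\rangle$), so $\delta_{\pi'}=\langle\overline u\rangle\,\delta_\pi$. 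Hence for $\phi\in I^n_q(L)'$ we get $\delta_{\pi'}(\phi)-\delta_\pi(\phi)=\langle\langle\overline u\rangle\rangle\otimes\delta_\pi(\phi)\in I^n_q(\overline L)$, which vanishes in $\overline{I}^{n-1}_q(\overline L)=I^{n-1}_q(\overline L)/I^n_q(\overline L)$; thus $\delta$ is independent of $\pi$. The same computation explains why no such independence is asserted for $\Delta_\pi$: there the correction term lands only in $I^n_q(\overline L)$, one filtration step short of the $I^{n+1}_q(\overline L)$ that would be needed to vanish in $\overline{I}^n_q(\overline L)$.

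The main obstacle I expect is the residue computation on general Pfister forms: getting the valuation normalization right and tracking how multiplication by $\langle\pi\rangle$ interchanges $\partial_\pi^1$ and $\partial_\pi^2$, so that the two distinct degrees $n$ and $n-1$ appearing in the image of $\delta_\pi$ emerge correctly, together with verifying surjectivity by explicit lifts.
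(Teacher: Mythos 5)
Your proposal is correct and takes essentially the same route as the paper: the paper also reduces (1) and (2) to the residue computation on bilinear Pfister forms (citing the arguments of Arason's Satz 3.1 where you write out the normalization to at most one odd slot, the case analysis, and the lifting argument for surjectivity), then deduces the statements for $I^n_q(L)'$ from the generators, and proves (3) exactly as you do, via $\partial^2_{\pi}(B)+\partial^2_{\pi'}(B)=\langle 1,\overline{u}\rangle\otimes\partial^2_{\pi'}(B)\in I^n\overline{L}$ because $\partial^2_{\pi'}(B)\in I^{n-1}\overline{L}$. Your tacit replacement of $P_n(L)'$ by $GP_n(L)'$ in (1)--(2) is in fact the correct reading of the statement --- unscaled Pfister forms alone produce only the $GP_{n-1}(\overline{L})$ (resp.\ unscaled $n$-fold) part of the image, the scaled contributions arising precisely from an odd-valued scalar $\langle c\rangle$ as you note --- and since $I^n_q(L)'$ is generated by $GP_n(L)'$, this is all that parts (1)--(3) require, matching what the Arason-type computation invoked by the paper actually delivers.
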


\begin{proof} For (1) and (2): Using some arguments from the proof of \cite[Satz 3.1]{Arason:1975}, we get $\partial_{\pi}^2(BP_n(L)) =GBP_n(\overline{L}) \cup GBP_{n-1}(\overline{L})$ and $(\partial_{\pi}^1+\partial_{\pi}^2)(BP_n(L)) =GBP_n(\overline{L})$, where $BP_n(L)$ denotes the set of $n$-fold bilinear Pfister forms over $L$ and $GBP_n(L)=L^{\times}.BP_n(L)$. Consequently, $\partial_{\pi}^2(I^nL)=I^{n-1}\overline{L}$ and $(\partial_{\pi}^1+\partial_{\pi}^2)(I^nL)=I^{n}\overline{L}$. Now the statements (1) and (2) readily follow from the definitions of $\delta_{\pi}$ and $\Delta_{\pi}$.

For (3): The maps $\delta$ and $\Delta_{\pi}$ are well-defined by statements (1) and (2). Let $\pi'$ be another uniformizer of ${\mathcal O}$. Hence, there exists $u\in {\mathcal O}^{\times}$ such that $\pi=u\pi'$. Clearly, for any $d\in {\mathcal O}$, the form $[1, \overline{d}]$ is independent of the uniformizer. Moreover, for any form $B\in I^nL$, we have $\partial^2_{\pi}(B)+ \partial^2_{\pi'}(B)=\left<1, \overline{u}\right>\otimes \partial^2_{\pi'}(B)\in I^n\overline{L}$ because $\partial^2_{\pi'}(B) \in I^{n-1}\overline{L}$. Consequently, $\delta_{\pi}(\phi)+ I^{n}_q(\overline{L})=\delta_{\pi'}(\phi)+ I^{n}_q(\overline{L})$ for any $\phi \in I^{n}_q(L)'$, and thus the map $\delta$ is independent of the uniformizer.
\end{proof}

\begin{notation} Let $H_2^{m+1}(L)'$ denote the subgroup $\nu_L(m)\wedge H_2^1({\mathcal O})$ of $H_2^{m+1}(L)$. 
\end{notation}

Kato's isomorphism $f_{m+1}$ induces an isomorphism $g_{m+1}: H_2^{m+1}(L)' \rightarrow \overline{I}^{m+1}_q(L)'$. 

As a consequence of Proposition \ref{Residues} and Lemma \ref{filtration}, we obtain two residue maps$$\xi: H_2^{m+1}(L)' \rightarrow H_2^{m}(\overline{L})$$and$$\chi_{\pi}: H_2^{m+1}(L)' \rightarrow H_2^{m+1}(\overline{L})$$so that the following diagrams commute:

$$\begin{tabular}{cccc}
$\xymatrix{
H_2^{m+1}(L)' \ar[r]^{g_{m+1}}\ar[d]^{\xi} & \overline{I}^{m+1}_q(L)'  \ar[d]^{\delta}\\
H_2^{m}(\overline{L})\ar[r]^{f_{m}} & \overline{I}^m_q(\overline{L})}$ & & & $\xymatrix{
H_2^{m+1}(L)' \ar[r]^{g_{m+1}}\ar[d]^{\chi_{\pi}} & \overline{I}^{m+1}_q(L)'  \ar[d]^{\Delta_{\pi}}\\
H_2^{m+1}(\overline{L})\ar[r]^{f_{m+1}} & \overline{I}^{m+1}_q(\overline{L})}$
\end{tabular}$$
that is $\xi=f_{m}^{-1}\circ \delta \circ g_{m+1}$ and $\chi_{\pi}=f_{m+1}^{-1}\circ \Delta_{\pi} \circ g_{m+1}$.
\vskip2mm

Now we are able to prove Proposition \ref{Injection}.\vskip1.5mm

\noindent{\it Proof of Proposition \ref{Injection}.} Let $F(t)$ be the rational function field in the indeterminate $t$. Recall that $A'$ is the division $F(t)$-algebra Brauer equivalent to $A\otimes_F [a, t)$. We have also that $A'\otimes_{F(t)}{K(t)}$ is Brauer equivalent to $B\otimes_K {K(t)}$. Let $c=\sum_{i=1}^m\overline{a_i\frac{db_i}{b_i}}\in H_2^2(F)$ be such that $c=[A]$. Hence, we get $[A']=c+\overline{a\frac{dt}{t}}$. 

The map$$H^3_2(F)/\tr_*(\dlog K^{\times} \wedge [B]) \longrightarrow H^3_2(F(t))/\dlog F(t)^{\times} \wedge [A']$$is well-defined by the same argument as in \cite[Proof of Proposition 4.9]{Barry}. 

Now let $w \in H_2^3(F)$ be such that
\begin{equation}
w_{F(t)}= \left( c+ \overline{a\frac{dt}{t}}\right)\wedge \frac{df(t)}{f(t)}
\label{e1}
\end{equation}
for some $f(t)\in F(t)^{\times}$. Without loss of generality,  we may assume that $f(t)$ is square free. Our aim is to prove that 
$w \in \tr_*(\dlog K^{\times} \wedge [B])$. 

We consider the $t$-adic valuation on $F(t)$. Clearly, $w_{F(t)}$, $\overline{a\frac{dt}{t}\wedge \frac{df(t)}{f(t)}}$, and $c$ belong to $H_2^3(F(t))'$. By applying the map $\xi$, previously defined, to the equality (\ref{e1}), we get: 
\begin{equation}
\xi \left( c\wedge \frac{df(t)}{f(t)} \right)= \xi\left( \overline{a\frac{dt}{t}}\wedge \frac{df(t)}{f(t)}\right).
\label{e2}
\end{equation}

(i)  Suppose that $f(t)$ is a unit. On the one hand, since $\partial_t^2(\langle\langle t, f(t)\rangle\rangle)=\langle 1, f(0)\rangle$ and $\xi(c\wedge \frac{df(t)}{f(t)})=0$, it follows from (\ref{e2}) that $\overline{a\frac{df(0)}{f(0)}}=0\in H_2^2(F)$. On the other hand, we apply $\chi_t$ to the relation 
(\ref{e1}), we get $w=c\wedge \frac{df(0)}{f(0)}$. Since $\overline{a\frac{df(0)}{f(0)}}=0$, it follows that $f(0)\in D_F([1,a])=N_{K/F}(K^{\times})$. Using  \cite[Lemma 2.5]{AAB} and the Frobenius reciprocity we get $w \in \tr_*(\dlog K^{\times}\wedge [A])$. 

(ii) Suppose that $f(t)=tg(t)$. Since $\langle\langle t, f(t)\rangle\rangle\cong \langle\langle t, g(t)\rangle\rangle$, it follows that $\partial_t^2(\langle\langle t, f(t)\rangle\rangle)=\langle 1, g(0)\rangle$, and thus $\xi(\overline{a\frac{dt}{t}}\wedge \frac{df(t)}{f(t)})=\overline{a\frac{dg(0)}{g(0)}}$. Moreover, because $\partial_t^2(\langle\langle b_i, f(t)\rangle\rangle)=g(0)\langle 1, b_i\rangle$ for any  $1\leq i \leq m$, we get $\xi(c\wedge \frac{df(t)}{f(t)})=c$. Hence, we obtain by (\ref{e2}) that $c=\overline{a\frac{dg(0)}{g(0)}}$, i.e., $A$ is Brauer equivalent to $[a, g(0))$. This implies that $B\sim_{Br} A\otimes K\sim_{Br} 0$, and thus $\tr_*(\dlog K^{\times} \wedge [B])=0$.

Moreover, applying $\chi_t$ to the equation (\ref{e1}) implies that 
\[
w=\chi_t(c \wedge \frac{df(t)}{f(t)})+\chi_t(\overline{a\frac{dt}{t}}\wedge\frac{df(t)}{f(t)})=\overline{a\frac{dg(0)}{g(0)}}\wedge  \frac{dg(0)}{g(0)}=0.
\]\qed

\vspace{0.5cm}
\begin{appendix}

\section{Torsion in codimension 2 Chow groups of projective quadrics in characteristic two}

A regular quadratic form means a quadratic form which is either nonsingular or singular whose quasilinear part is anisotropic of dimension $1$. Our aim is to prove the following:

\begin{thm}
Let $\phi$ be a regular quadratic form over $F$ of dimension $>8$, and $X_{\phi}$ its projective quadric. Then, the group $\CH^2(X_{\phi})$ is torsion free.
\label{thtor}
\end{thm}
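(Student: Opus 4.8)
The plan is to transpose Karpenko's argument from \cite[Theorem 6.1]{Kar91} to characteristic $2$, replacing each quadratic-form input by its characteristic $2$ analogue from \cite{EKM}. Since $\phi$ is regular, its projective quadric $X_\phi$ is smooth of dimension $\dim\phi-2>6$, so standard intersection theory applies without change. First I would fix a field $\ell$ over which $\phi$ splits; over $\ell$ the quadric $X_{\phi,\ell}$ is split and $\CH^*(X_{\phi,\ell})$ is free abelian, generated by the powers $h^i$ of the hyperplane class together with the classes of the linear subspaces. In particular $\CH^2(X_{\phi,\ell})$ is torsion free, so the torsion of $\CH^2(X_\phi)$ is contained in the kernel of the restriction map $\CH^2(X_\phi)\to\CH^2(X_{\phi,\ell})$; it therefore suffices to bound this kernel.

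The principal tool is the Grothendieck group $K_0(X_\phi)$. Swan's computation of the $K$-theory of a quadric is essentially insensitive to the characteristic, because it is governed by the even Clifford algebra $C_0(\phi)$, which is defined in characteristic $2$ as well; this yields a free abelian $K_0(X_\phi)$ with an explicit basis assembled from the twisting sheaves $\mathcal O(i)$ and the spinor bundles attached to $C_0(\phi)$. On $K_0(X_\phi)$ I would compare the topological filtration with the $\gamma$-filtration via Chern classes. There is a canonical surjection $\CH^i(X_\phi)\twoheadrightarrow\operatorname{gr}^i_{\mathrm{top}}K_0(X_\phi)$ with torsion kernel, and since $K_0(X_\phi)$ is torsion free, the torsion of $\CH^2(X_\phi)$ is measured by the failure of an explicit integral combination $\xi$ of $c_2$ of the Clifford bundle and of powers of $h$ to be realised by a codimension-$2$ cycle.

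It then remains to carry out the numerical computation in codimension $2$. Writing out the Chern classes of the generators of $K_0(X_\phi)$ and the relations between the two filtrations, one finds that the obstruction $\xi$ lies in a finite group whose order is bounded in terms of $\dim\phi$, and the inequality $\dim\phi>8$ is exactly what forces this obstruction to vanish, so that $\xi$ lifts to $\CH^2(X_\phi)$ and the restriction map is injective in codimension $2$. Where convenient this step can be organised as an induction on the dimension of the anisotropic part of $\phi$, passing through the generic splitting tower and using that the first Witt index of a regular form in characteristic $2$ is controlled exactly as in \cite{EKM}.

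I expect the main obstacle to be the characteristic $2$ realisation of the $K$-theoretic input, and in particular Swan's description of $K_0(X_\phi)$ together with the Chern-class formulas, for those regular forms that are \emph{singular} (odd-dimensional, with a one-dimensional anisotropic quasilinear part): here the Clifford algebra and the geometry of $X_\phi$ differ from the nonsingular even-dimensional case, so the basis of $K_0$ and the comparison of filtrations must be set up separately. A secondary technical point is to confirm that the generic-splitting behaviour and the first Witt index of regular forms in characteristic $2$ match the classical picture, for which I would rely on \cite{EKM} and on the characteristic $2$ results already used in this paper.
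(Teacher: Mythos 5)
Your toolbox is the right one (the comparison $\CH^2(X_\phi)\simeq K_0(X_\phi)^{(2/3)}$, the Clifford algebra, elementarity over a splitting field), and in that broad sense you are in the same Karpenko-style framework as the paper. But the decisive step of your plan --- a purely numerical Chern-class computation producing an obstruction ``in a finite group whose order is bounded in terms of $\dim\phi$'', with $\dim\phi>8$ forcing it to vanish --- does not correspond to any real mechanism. The potential torsion in $\CH^2$ of a quadric is (at most) $\Z/2$ in essentially every dimension, and it is genuinely nonzero for some forms of dimension $6,7,8$ (e.g.\ Albert quadrics); so no count of Chern classes of $\mathcal O(i)$'s and spinor bundles in terms of $\dim\phi$ alone can distinguish $\dim\phi=8$ from $\dim\phi=10$. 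Where $\dim\phi>8$ actually bites is in quadratic form theory, after two reductions your sketch never reaches: the paper first uses affine fibrations of the quadric and localization sequences to replace $\phi$ by a regular $9$-dimensional form over a \emph{purely transcendental} extension with the same $\TCH^2$ (Proposition \ref{prtor1}); then Karpenko's transfer facts \cite[Cor.\ 4.5 and Cor.\ 4.9]{Kar91} reduce to a $10$-dimensional nonsingular form $\psi\in IF\otimes W_q(F)$ with trivial Clifford invariant over a field splitting $C_0$; and finally the key input is that such a $\psi$, if not hyperbolic, is isotropic with $\dim\psi_{an}=8$ (there are no anisotropic $10$-dimensional forms with trivial Arf and Clifford invariants), whence $\CH^2(X_\psi)\simeq\CH^1(X_{\psi_{an}})=\Z\cdot h$ is elementary. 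Without this structure theorem your argument has no way to conclude, and with it the ``numerical obstruction'' computation you envisage becomes unnecessary.

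Two further concrete problems. First, your fallback of inducting ``through the generic splitting tower'' fails at the start: torsion in $\CH^2$ need \emph{not} inject under the function field of a quadric --- that is precisely why the proof of Theorem \ref{DimCoh} has to invoke torsion-freeness of $\CH^2(X_\psi)$ (i.e.\ Theorem \ref{thtor} itself) to get injectivity of $\TCH^2$ under $\F(X_\psi)/\F$. Purely transcendental extensions preserve $\TCH^2$, which is why the paper's Step 1 fibers the quadric itself rather than passing along the splitting tower. Second, the point you flag as the ``main obstacle'' --- a characteristic $2$, quasilinear-part version of Swan's basis of $K_0$ and its Chern classes for odd-dimensional regular forms --- is not solved head-on in the paper and should not be: the odd-dimensional case $\psi=\langle a\rangle\perp\psi'$ is handled by the transfer of \cite[Cor.\ 4.5]{Kar91} to the nonsingular form $a[1,\Delta(\psi')]\perp\psi'$, so that the splitting of $C_0$ via \cite[Cor.\ 4.9]{Kar91} is only ever applied to nonsingular forms. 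Redoing Swan in characteristic $2$ for the quasilinear case is both harder than the problem requires and not supplied by the reference you lean on.
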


The analogue of this theorem is well-known in characteristic not $2$ and is due to Karpenko \cite{Kar91}. Our proof will proceed in three steps and follows many arguments given by Karpenko in \cite{Kar91} and \cite{Kar95}, which apply to our case since we consider smooth projective quadrics. We decide to include a proof since there is no reference concerning Theorem \ref{thtor}. 

\medskip

\noindent{\bf Step 1:} Our aim in this step is to prove the following:

\begin{prop} Let $\phi$ be a regular quadratic form over $F$ of dimension $>8$. Then, there exists a purely transcendental extension $L/F$ and a regular quadratic form $\phi'$ over $L$ of dimension $9$ such that $\TCH^2(X_{\phi})\simeq \TCH^2(X_{\phi'})$.
\label{prtor1}
\end{prop}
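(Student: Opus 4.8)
**The plan is to reduce the torsion computation from an arbitrary high-dimensional regular quadric to a 9-dimensional one, using a generic hyperplane section argument.** The central idea is that the torsion subgroup $\TCH^2(X_\phi)$ should be insensitive to passing to a subform of codimension one, as long as we stay above the threshold dimension $9$. Concretely, I would work with a subform: write $\phi = \phi'' \perp \langle c \rangle$ (or the appropriate nonsingular/regular analogue in characteristic $2$) so that $\phi''$ is a regular form of dimension $\dim\phi - 1$, and relate the quadric $X_\phi$ to $X_{\phi''}$. Iterating this reduction step brings the dimension down to $9$. The purely transcendental extension $L/F$ enters because, to make the hyperplane section \emph{generic} and keep everything regular, one typically adjoins indeterminates parametrizing the coefficients of the cutting hyperplane, so that $\phi'$ is defined over a rational extension $L = F(t_1,\dots,t_r)$ and remains regular (indeed anisotropic behavior of the quasilinear part is preserved over purely transcendental extensions).

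The key steps, in order, would be as follows. First, set up the geometry: for a regular form $\phi$ of dimension $d > 9$, exhibit $X_\phi$ as containing a hyperplane section isomorphic to $X_{\phi''}$ where $\phi''$ is regular of dimension $d-1$, and use the projective bundle / localization sequence for Chow groups to compare $\CH^2(X_\phi)$ and $\CH^2$ of the relevant open and closed pieces. Second, analyze how torsion in $\CH^2$ behaves under this comparison; the goal is to show the connecting maps in the localization sequence induce an \emph{isomorphism} on torsion subgroups $\TCH^2(X_\phi) \simeq \TCH^2(X_{\phi''})$ once $\dim \phi - 1 \geq 9$. This is where one invokes that for quadrics the low-codimension Chow groups are controlled by known generators (powers of the hyperplane class and the classes of linear subspaces), so the free parts absorb the difference and only the torsion is matched. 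Third, pass to the purely transcendental extension to guarantee regularity of the successive subforms and to make the reduction uniform; since $L/F$ is rational, scalar extension does not create or destroy torsion in codimension $2$ (this uses that $\CH^2$ of a quadric is a birational-type invariant insensitive to rational base change in the relevant range, following Karpenko's arguments in \cite{Kar91} and \cite{Kar95}). Iterating down to dimension exactly $9$ yields the desired $\phi'$.

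The main obstacle I expect is the second step: proving that the comparison map on torsion is an \emph{isomorphism} rather than merely a surjection or an injection. In characteristic not $2$ Karpenko handles this via the explicit cellular-type structure of split quadrics and careful bookkeeping of the motivic decomposition; in characteristic $2$ the subtlety is that regular quadrics can be singular (the quasilinear part is anisotropic of dimension $1$), so $X_\phi$ need not be smooth in the naive sense, and the projective bundle and localization arguments must be adapted to this mildly singular setting. One must verify that the quadric $X_\phi$ is still smooth as a projective variety over the appropriate base (the regularity hypothesis is exactly what ensures smoothness of the projective quadric despite the quadratic form being singular), so that intersection theory and the localization sequence apply verbatim. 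The careful point is checking that each hyperplane section in the chain remains regular of the correct dimension and that no extra torsion is introduced at the boundary of the dimension range; this is why the bound $\dim \phi > 8$ (equivalently $\dim\phi' = 9$) is sharp and must be tracked through every reduction.
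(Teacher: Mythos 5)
Your guiding intuition is partially right: in the nonsingular case $\phi=[a,b]\perp\psi$, the form the paper ends up with, $\langle at_1^2+t_1+b\rangle\perp\psi$ over $F(t_1)$, is exactly the generic hyperplane section you have in mind (restrict $[a,b]$ to the generic line $y=t_1x$). But the crux of your argument --- that the localization sequence for the pair $(X_{\phi},X_{\phi''})$ ``induces an isomorphism on torsion $\TCH^2(X_{\phi})\simeq\TCH^2(X_{\phi''})$'' --- is precisely what needs proof, and the sequence you invoke does not even produce a map between these two groups: it reads $\CH^1(X_{\phi''})\stackrel{i_*}{\to}\CH^2(X_{\phi})\to\CH^2(U)\to 0$, comparing $\CH^2(X_{\phi})$ with the \emph{open complement} $U$, not with the closed piece; and no ``free parts absorb the difference'' principle analyzes the Gysin pullback $i^*$. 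The paper's actual mechanism, absent from your sketch, is: first $\TCH^2(X)\simeq\CH^2(U)$ for the affine quadric $U=X\setminus Y$ (using $\CH^1(Y)=\Z\cdot h$); then fiber $U$ over $\A^1$ by one coordinate and prove, by a second localization computation, that $\CH^1(U_\alpha)=0$ for \emph{every closed fiber} $U_\alpha$, so that $\CH^2(U)\simeq\CH^2(U_\theta)$ for the generic fiber over $F(t_1)$; then reassemble $\CH^2(U_\theta)\simeq\TCH^2(Z)$ for the projective quadric $Z$ of the lower-dimensional form. The purely transcendental extension enters as the function field of the base of this fibration, not by adjoining coefficients of a cutting hyperplane, and no statement of the form ``rational base change neither creates nor destroys torsion in $\CH^2$ of the same quadric'' is used or available for free: specialization gives injectivity, but your scheme would still need surjectivity on torsion for a comparison between two \emph{different} quadrics, which you nowhere supply.

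Your iteration also breaks in characteristic $2$ at the singular stage. After one reduction the form is regular singular, $\langle a_1\rangle\perp\psi$ with $\psi$ nonsingular, and a further naive hyperplane section can produce a quasilinear part of dimension $2$ --- no longer a regular form, hence outside the framework entirely. This is why the paper has a genuinely separate Case 2: it fibers \emph{twice} over $\A^1$ and lands on $\langle a_1t_1^2+a_2t_2^2+t_2+b_2\rangle\perp\psi'$ over $F(t_1,t_2)$, dropping the dimension by $2$ while keeping the quasilinear part one-dimensional, and along the way it must establish $\CH^1(Z)=\Z\cdot h$ for a \emph{singular} quadric $Z$ by a Galois descent argument ($\CH^1(Z)\hookrightarrow(\CH^1(Z_K))^G$ for a quadratic splitting extension $K/F$), a step your uniform ``write $\phi=\phi''\perp\langle c\rangle$ and iterate'' does not anticipate. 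One smaller point in your favor: your closing remark is correct that regularity of the form guarantees smoothness of the projective quadric, so the intersection-theoretic machinery applies; but the two concrete gaps above --- no mechanism for the torsion isomorphism, and an iteration that leaves the class of regular forms --- are where the proposal, as written, fails.
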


\begin{proof} Let $\phi$ be a regular quadratic form over $F$ and $X=X_{\phi}$ its projective quadric. 
\medskip 

\noindent{\underline{\it Case 1:} $\phi$ is nonsingular.}

\medskip 

We write $\phi=[a,b] \perp \psi$. Let $Y=X_{\left<a\right>\perp \psi}$ and $U=X\setminus Y$. We have an exact sequence
\begin{eqnarray*}
\CH^1(Y)\stackrel{i_*}{\longrightarrow} \CH^2(X) \stackrel{j^*}{\longrightarrow} \CH^2(U)\longrightarrow 0
\label{exactseq1}
\end{eqnarray*}
induced by the inclusions  $j: U\hookrightarrow  X$ and $i: Y\hookrightarrow  X$. Since $\CH^1(Y)=\Z .h$, where $h$ is the hyperplane section of $Y$, we get
\begin{eqnarray}
\CH^2(X)/i_*(\CH^1(Y))\simeq \TCH^2(X)\simeq \CH^2(U).
\label{isom1}
\end{eqnarray}

Note that $U$ is the affine variety given by: $ax^2+x+b + \psi(y_1,\ldots, y_n)$. Let $\pi: U\longrightarrow \A^1$ be the morphism given by $(x,y_1, \ldots, y_n)\mapsto x$. Then we have an exact sequence $$\coprod_{\alpha \in (\A^1)^1} \CH^1(U_{\alpha})\longrightarrow \CH^2(U)\longrightarrow \CH^2(U_{\theta})\longrightarrow 0,$$where $U_{\alpha}$ is the fiber of $\pi$ over the closed point $\alpha$, and $U_{\theta}$ is the generic fiber. Note that $U_{\theta}$ is the affine quadric over the rational function field $F(t_1)$ given by: $at_1^2+t_1+b +\psi$, and $U_{\alpha}$ is defined over the residue field $F(\alpha)$ by the affine quadric: 
\[
\overline{at_1^2+t_1 +b} +\psi.
\] 

\noindent{\it Claim 1:} $\CH^1(U_{\alpha})=0$ for each $\alpha$, and thus $\CH^2(U)\simeq \CH^2(U_{\theta})$.

In fact, we have the exact sequence
\begin{equation}
\CH^0(U_2) \longrightarrow \CH^1(U_1) \longrightarrow \CH^1(U_{\alpha})\longrightarrow 0,
\label{eq}
\end{equation}
where $U_2$ is the projective quadric given by $\psi_{F(\alpha)}$, and $U_1$ is the projective quadric given by 
\[
\langle \overline{at_1^2+t_1 +b}\rangle \perp \psi_{F(\alpha)}.
\]
 Since $\CH^0(U_2)=\Z.[U_2]$, $U_2$ is of codimension $1$ in $U_1$ and $\CH^1(U_1)$ is generated by the hyperplane section, it follows from (\ref{eq}) that $\CH^1(U_{\alpha})=0$. 

\medskip 

\noindent{\it Claim 2:} $\CH^2(U_{\theta})\simeq \TCH^2(Z)$, where $Z$ is the projective quadric defined over $F(t_1)$ by the quadratic form $\left<at_1^2+t_1 + b\right> \perp \psi$. To this end, we see $U_{\theta}$ as $Z\setminus Z'$, where $Z'$ is the projective quadric given by $\psi_{F(t_1)}$, and we use the exact sequence
\begin{eqnarray*}
\CH^1(Z')\longrightarrow \CH^2(Z)\longrightarrow \CH^2(U_{\theta})\longrightarrow 0
\end{eqnarray*}to get the desired claim as we did for (\ref{isom1}).

Hence, combining (\ref{isom1}) with claims 1 and 2 yields $\TCH^2(X)\simeq \TCH^2(Z)$.

\medskip 

\noindent{\underline{\it Case 2:} $\phi$ is singular.}

\medskip 
We write $\phi = \left<a_1\right> \perp \psi$, where $\psi =[a_2, b_2]\perp \psi'$ is nonsingular. As at the beginning of the case 1, we have $\TCH^2(X)\simeq \CH^2(U)$, where $U$ is the affine quadric given by: $a_1 + \psi$. 

Let $U'$ be the affine quadric given by $a_1+ \left<a_2\right>\perp \psi'$, and let $U''=U\setminus U'$. Clearly, $U''$ is the affine variety given by: $\left<a_1\right> + a_2x_2^2+x_2+b_2 +\psi'$. We have the exact sequence:
\begin{eqnarray*}
\CH^1(U')\longrightarrow \CH^2(U)\longrightarrow \CH^2(U'')\longrightarrow 0.
\end{eqnarray*}

\noindent{\it Claim 1.} $\CH^1(U')=0$, and thus $\CH^2(U)\simeq \CH^2(U'')$.

To this end, and using the exact sequence 
$$
\CH^0(Z')\longrightarrow \CH^1(Z)\longrightarrow \CH^1(U')\longrightarrow 0,
$$
it suffices to prove that $\CH^1(Z)=\Z.h$, where $Z$ is the projective quadric given by $\psi'':=\left<a_1, a_2\right> \perp \psi'$, and $Z'$ is the projective quadric given by $\left<a_2\right>\perp \psi'$. 

In fact, let $K/F$ be a separable quadratic extension such that $\psi''$ is isotropic, and $G$ the Galois group of $K/F$. Using the same argument as in \cite[Subsection (2.1)]{Kar91} we have $Z_K\setminus Z''\simeq \A_K^d$, where $d=\dim Z$ and $Z''$ is a singular quadric of dimension $\dim Z-1$. Since $\CH^1(\A^d_K)=0$ \cite[]{Fultonbook}, it follows that $\CH^1(Z_K)\simeq \CH^0(Z'')=\Z.[Z'']$, and thus $\CH^1(Z_K)$ is generated by $h$. By the same argument used for the proof of \cite[Lem. 2.4]{Kar91} we have an injection $\CH^1(Z)\hookrightarrow  (\CH^1(Z_K))^G$, and consequently, $\CH^1(Z)=\Z.h$.

Now, let $\pi: U''\longrightarrow \A^1$ be the morphism defined by: $(x_1, x_2, x_3, y_3, \ldots, x_n, y_n)\mapsto x_2$. This induces the exact sequence
$$
\coprod_{\alpha \in (\A^1)^1}\CH^1(U''_{\alpha}) \longrightarrow \CH^2(U'') \longrightarrow \CH^2(U''_{t_2})\longrightarrow 0,
$$
where $U''_{t_2}$ the fiber over the generic point given by the affine quadric $\left<a_1\right> + a_2t_2^2+t_2+b_2 +\psi'$ over $F(t_2)$, and $U''_{\alpha}$ is the fiber over the closed point $\alpha$ defined by the affine quadric 
\[
\langle a_1\rangle + \overline{a_2t_2^2+t_2+b_2} +\psi'
\]
 over $F(\alpha)$. By the Claim 1 before, we have $\CH^1(U''_{\alpha})=0$ for each $\alpha$. Then, $\CH^2(U'')\simeq \CH^2(U''_{t_2})$.

\medskip

Now let $V=U''_{t_2}$ and $\pi: V\longrightarrow \A^1$ be the morphism defined by: $(x_1, x_3, y_3, \ldots, x_n, y_n)\mapsto x_1$. Then, again we have the exact sequence
$$
\coprod_{\alpha \in (\A^1)^1} \CH^1(V_{\alpha})\longrightarrow \CH^2(V)\longrightarrow \CH^2(V_{t_1})\longrightarrow 0,
$$
where $V_{\alpha}$ is the fiber over the closed point $\alpha$, and $V_{t_1}$ is the generic fiber. Note that $V_{t_1}$ is the affine quadric over the rational function field $F(t_2)(t_1)$ given by: 
\[
a_2t_2^2+t_2+b_2 +  a_1t_1^2 + \psi'(x_3,y_3, \ldots, x_n, y_n),
\]
 and $V_{\alpha}$ is defined over the residue field $F(t_2)(\alpha)$ by the affine quadric: 
 \[
 \overline{a_2t_2^2+t_2+b_2 +  a_1t_1^2} +\psi'.
 \]
For each point $\alpha$, we have $\CH^1(V_{\alpha})=0$ as we did in the case 1. Hence, $\CH^2(V)\simeq \CH^2(V_{t_1})$, and thus $\TCH^2(X)\simeq \CH^2(V_{t_1})$. 

Using the same arguments as in Claim 2 of the Case 1, we deduce that $\TCH^2(X)\simeq \TCH^2(W)$, where $W$ is the projective quadric given by $\langle a_1t_1^2+a_2t_2^2+t_2+b_2\rangle \perp \psi'$ over $F(t_1, t_2)$. 

Now repeating Case 1 and 2 it is clear that we get the desired result.
\medskip

\noindent{\bf Step 2:} Our aim in this step is to reduce the study of $\TCH^2(X)$, where $X$ is a quadric given by a regular form of dimension $9$, to the study of $\TCH^2(X_{\psi})$ such that $\psi$ is a $10$-dimensional quadratic form of trivial Arf and Clifford invariants. All the material needed for this reduction is explained in the paper by Karpenko \cite[Section 4]{Kar95} and remains true in characteristic two. So we will just give a brief idea on how to proceed. The main tool used by Karpenko is the Grothendieck group $K_0(X)$ of the quadric $X$. This group is equipped with the topological filtration: 
\[
\cdots \supset K_0(X)^{(p)} \supset K_0(X)^{(p+1)} \supset \cdots.
 \]
For any integer $p$, let $K_0(X)^{(p/p+1)}$ denote the quotient $K_0(X)^{(p)}/K_0(X)^{(p+1)}$. There is a connection between the Chow group and the Grothendieck groups given by an epimorphism 
\[
\CH^p(X) \longrightarrow K_0(X)^{(p/p+1)}, 
\]
defined by  $[Y]\mapsto [{\mathcal O}_Y]$. For our case $p=2$, this morphism is an isomorphism, see the explanations given in \cite[Subsection 3.1]{Kar91}. Moreover, in \cite[Section 4]{Kar91} Karpenko studies the elementary part of $K_0(X)$, which is defined as the subgroup of $K_0(X)$ generated by all $h^p$, for $p\geq 0$, where $h$ is the hyperplane section of $X$. Two facts have been proved in \cite{Kar95}:
\medskip

\noindent{\bf (Fact 1)} Let $\psi$ be an odd-dimensional quadratic form and $\phi=\psi \perp \left<-\det_{\pm}\psi\right>$. If for some $p$ the groups $K_0(X_{\phi})^{(i/i+1)}$ are elementary for all $i\leq p$, then the same thing holds for the groups $K_0(X_{\psi})^{(i/i+1)}$ for all $i\leq p$ \cite[Cor. 4.5]{Kar91}. In our case we should take $\psi=\left<a\right>\perp \psi'$ for $\psi'$ nonsingular and $\phi =a[1, \Delta(\psi')] \perp \psi'$.

\medskip 

\noindent{\bf (Fact 2)} Let $\phi$ be a quadratic form and $E/F$ a field extension that splits $C_0(\phi)$. If for some $p$ the groups $K_0(X_{\phi_E})^{(i/i+1)}$ are elementary for all $i\leq p$, then the same thing holds for the groups $K_0(X_{\phi})^{(i/i+1)}$ for all $i\leq p$ \cite[Cor. 4.9]{Kar91}. In our case we take $\phi$ nonsingular.

\medskip

\noindent{\bf Step 3 (The conclusion):}  Let $\psi \in IF \otimes W_q(F)$ of dimension $10$ and trivial Clifford invariant, and let $X$ be its projective quadric. Suppose that $\psi$ is not hyperbolic. Then, $\psi$ is isotropic and $\dim \psi_{an}=8$. Let $Y$ be the projective quadric given by $\psi_{an}$. Using the same arguments as in \cite[Subsection 2.2]{Kar91}, we get $\CH^2(X)\simeq \CH^1(Y)$. Since $\CH^1(Y)=\Z .h$, the group $\CH^2(X)$ is elementary, i.e., $K_0(X)^{(2/3)}$ is elementary. As $K_0(X)^{(1/2)}$ is also elementary, it follows from (Fact 2) that the groups $K_0(X_{\phi})^{(i/i+1)}$ are elementary for any $i\leq 2$ and any quadratic form $\phi \in IF \otimes W_q(F)$ of dimension $10$. Consequently, we deduce from (Fact 1) that the groups $K_0(X_{\phi})^{(i/i+1)}$ are elementary for any $i\leq 2$ and any regular quadratic form $\phi$ of dimension $9$. Hence, for such a quadratic form $\CH^2(X_{\phi})$ is generated by $h^2$, and thus $\CH^2(X_{\phi})$ is torsion free. This completes the proof of Theorem \ref{thtor}.

\end{proof}

\end{appendix}

\section*{Acknowledgements}

The authors would like to thank Jean-Pierre Tignol for the discussions and the words of advice all along this project, and  Nikita Karpenko for his help with writing the appendix. The authors also thank the anonymous referees for the helpful remarks on the submitted version.

The first author would like to thank the third author and Universit\'e d'Artois for their support and hospitality (in 2017) while a part of the work for this paper was done. He gratefully acknowledges support from the FWO Odysseus Programme (project Explicit Methods in Quadratic Form Theory).

\bibliographystyle{abbrv}
\bibliography{bibfile}
\end{document}